\documentclass[12pt, reqno]{amsart}
\usepackage{amsmath,amsfonts,amssymb}
\usepackage{etoolbox}
\usepackage{comment}
\usepackage{mathrsfs}
\usepackage{longtable}
\usepackage{graphicx}
\usepackage{mathtools}
\usepackage[breaklinks]{hyperref}
\usepackage{graphicx}
\usepackage{xcolor}  
\input amssym.def
\usepackage{amscd}
\usepackage[mathscr]{eucal}

\usepackage{tikz}

\theoremstyle{plain}
\newtheorem{thm}{Theorem}[section]

\newtheorem{lem}{Lemma}[section]

\newtheorem{prop}{Proposition}[section]

\newtheorem{rem}{Remark}[section]

\newcommand{\sgn}{\operatorname{sgn}}

\newcommand{\Z}{\mathbb{Z}}
\newcommand{\N}{\mathbb{N}}

\renewcommand{\H}{\mathbb{H}}
\numberwithin{equation}{section}

\renewcommand{\Im}{{\mathrm Im \,}}
\begin{document}

\title[On periodic sign changes]{ON  PERIODIC SIGN CHANGES FOR weighted representations of integers as colored sums of triangular and generalized pentagonal numbers}

\author{Ben Kane}
\address{Department of Mathematics, The University of Hong Kong, Pokfulam, Hong Kong}
\email{bkane@hku.hk}

\author{Meenu Sharma}
\address{Department of Mathematics, The University of Hong Kong, Pokfulam, Hong Kong }
\email{sharmameenuphd@gmail.com}

\thanks{The research of the first author was supported by grants from the Research Grants Council of the Hong Kong SAR, China (project numbers HKU 17314122, HKU 17305923). Part of the research contained in this paper was completed while the second author was a Ph.D. student in Kerala School of Mathematics}
\keywords{$\eta$-quotient, Circle method, sign changes, vanishing of coefficients, Modular forms, Dedekind sum }

\subjclass[2020]{11F11}
\date{\today}

\begin{abstract}
In this paper, we study sign changes and vanishing for the number of representations of an integer as a sum of triangular numbers plus three-colored sums of generalized pentagonal numbers with an even number of parts minus those with an odd number of parts. We study this by investigating coefficients appearing in the $q$-series expansion of $F(z) = \frac{\eta(z)}{{\eta(2z)}^2 {\eta(3z)}^3}$, where $\eta$ is the Dedekind-eta function. We use the Hardy-Ramanujan-Rademacher circle method to give an asymptotic formula for the coefficients.
\end{abstract}

\maketitle

\section{Introduction and statement of result}
Let $T_k :=\frac{k(k+1)}{2}$ be the $k$-th triangular number and  $P_5(m):=(3 m^2-m)/2$ be the $m$-th generalized pentagonal number (with $m\in\Z$). Consider the set $\Lambda_{j_1,j_2,j_3,j_4}(n)$ of $4$-colored partitions of $n$ as a sum 
\[
n=n_1+3{\color{blue}{n_2}}+3{\color{red}{n_3}}+3{\color{green}n_4},
\]
with
\[
n_1=\sum_{j=1}^{j_1} T_{x_j}
\]
(with $x_j\geq 1$) is a sum of exactly $j_1$ triangular numbers, 
\[
n_2=\sum_{j=1}^{j_2} P_5(y_j)
\]
(with $y_j\neq 0$) is a sum of exactly $j_2$ generalized pentagonal numbers, 
\[
n_3=\sum_{j=1}^{j_3} P_5(w_j)
\]
(with $w_j\neq 0$) is a sum of exactly $j_3$ generalized pentagonal numbers, and 
\[
n_4=\sum_{j=1}^{j_4} P_5(z_j)
\]
(with $z_j\neq 0$) is a sum of exactly $j_4$ generalized pentagonal numbers. For each $\lambda\in \Lambda_{j_1,j_2,j_3,j_4}(n)$, we define a weighting $\omega_{\lambda}$ by $1$ if the parity of the number of parts equals the parity of  
the sum of all of the "side lengths" (i.e., $m$ for $P_5(m)$) of the generalized pentagonal numbers and $-1$ if they are not equal.\footnote{For example, in the partition $396=T_2+T_5+3{\color{blue}P_{5}(4)}+3({\color{red}P_5(3)+P_5(-3)})+3{\color{green}P_5(-7)}$, there are $6$ parts and there are 3 odd "side lengths", $3$, $-3$, and $-7$, so the weighting is $-1$.}
Explicitly, we have 
\[
\omega_{\lambda}:=(-1)^{j_1+j_2+j_3+j_4}(-1)^{y_1+y_2+\dots+y_{j_2}+w_{1}+\dots+w_{j_3}+z_1+\dots+z_{j_4}}.
\]
We let 
\[
r_{\operatorname{e}}(n):=\#\{\lambda\in \Lambda_{j_1,j_2,j_3,j_4}(n):\omega_{\lambda}=1\}
\]
be the number of elements with the same parities and 
\[
r_{\operatorname{o}}(n):=\#\{\lambda\in \Lambda_{j_1,j_2,j_3,j_4}(n):\omega_{\lambda}=-1\}
\]
be the number of such partitions with opposite parities.
\begin{thm}\label{thm:combinatorics}
 We have
\begin{align*}
            r_{\operatorname{e}}(n)&>r_{\operatorname{o}}(n)\qquad \text{if}\; n\equiv 0, 2, 3 \pmod 6,\\
            r_{\operatorname{e}}(n)&<r_{\operatorname{o}}(n)\qquad \text{if}\; n\equiv 1, 5 \pmod 6,\\
            r_{\operatorname{e}}(n)&=r_{\operatorname{o}}(n)\qquad \text{if}\; n\equiv 4 \pmod 6.
        \end{align*}
\end{thm}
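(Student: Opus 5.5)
The plan is to turn the combinatorial statement into a statement about the Fourier coefficients of $F(z)=\frac{\eta(z)}{\eta(2z)^2\eta(3z)^3}$. After that, the sign pattern comes from an asymptotic formula given by the circle method, and the exact vanishing for $n\equiv 4\pmod 6$ is handled separately by a dissection identity.

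\textbf{Step 1 (generating function).} Write $\psi(q)=\sum_{k\ge0}q^{T_k}=\prod_{n\ge1}\frac{(1-q^{2n})^2}{1-q^n}$ and $(q;q)_\infty=\sum_{m\in\Z}(-1)^mq^{P_5(m)}$. Expanding $1/\psi(q)=\sum_{j_1\ge0}(-1)^{j_1}\bigl(\sum_{k\ge1}q^{T_k}\bigr)^{j_1}$ produces exactly the ordered $j_1$-tuples of triangular numbers, each weighted by $(-1)^{j_1}$. Similarly, $1/(q^3;q^3)_\infty=\sum_{j\ge0}(-1)^j\bigl(\sum_{m\ne0}(-1)^mq^{3P_5(m)}\bigr)^j$ produces the weight $(-1)^{j+\sum m}$. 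Taking the cube of the latter gives the three colours. Hence
\[
\sum_{n\ge0}\bigl(r_{\operatorname{e}}(n)-r_{\operatorname{o}}(n)\bigr)q^n=\frac{1}{\psi(q)\,(q^3;q^3)_\infty^3}=q^{1/2}F(z),
\]
using $q^{1/24-4/24-9/24}=q^{-1/2}$. So Theorem~\ref{thm:combinatorics} is equivalent to a sign statement for $c(n):=r_{\operatorname{e}}(n)-r_{\operatorname{o}}(n)$, where $F(z)=\sum_n c(n)q^{n-1/2}$.

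\textbf{Step 2 (vanishing for $n\equiv4\pmod6$).} I would prove this exactly, without asymptotics. The factor $(q^3;q^3)_\infty^{-3}$ is a series in $q^3$, so it suffices to $3$-dissect $1/\psi(q)$. Use $\psi(q)=A(q^3)+q\psi(q^9)$, where $A$ is an explicit eta-quotient. Then
\[
\frac1{\psi(q)}=\frac{A(q^3)^2-qA(q^3)\psi(q^9)+q^2\psi(q^9)^2}{A(q^3)^3+q^3\psi(q^9)^3}.
\]
This isolates the $q^{3k+1}$ part as $-q\,\psi(q^9)A(q^3)/(A(q^3)^3+q^3\psi(q^9)^3)$, and the denominator can be rewritten as an eta-quotient in $q^3$. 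Combining with the remaining factor, the $q^{3k+1}$-component of $q^{1/2}F$ becomes $q$ times an explicit eta-quotient in $q^3$. I then need to show that this quotient, as a series in $q^3$, is even. Equivalently, the $(q^3)^{\text{odd}}$ coefficients vanish, which gives $c(n)=0$ for $n\equiv 4\pmod 6$. I expect this to follow from recognising the quotient as a function of $q^6$ after applying the standard identities $\psi(q)=\frac{(q^2;q^2)^2}{(q;q)}$ and $(-q;q)_\infty=\frac{(q^2;q^2)}{(q;q)}$. If that fails, the fallback is to verify it as an identity of modular forms on a suitable $\Gamma_0(N)$ via the Sturm bound.

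\textbf{Step 3 (asymptotics, the main obstacle).} For the remaining residues, apply the Hardy--Ramanujan--Rademacher circle method to $F$, which is a weight $-1$ eta-quotient on $\Gamma_0(6)$. Using the eta transformation formula with Dedekind sums, compute the order of $F$ at each cusp $h/k$. This order depends only on $\gcd(k,6)$ and is governed by the quantity $\frac{\gcd(k,1)^2}{1}-\frac{2\gcd(k,2)^2}{2}-\frac{3\gcd(k,3)^2}{3}$. Keep the classes of $k$ for which $F$ has a pole at the cusp: these give the Bessel-function main terms $\asymp n^{-3/4}\,\mathrm{Bessel}(C_k\sqrt n)$. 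The remaining cusps contribute $O(n^{\varepsilon})$-type error terms.

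The sign of $c(n)$ is then determined by the largest exponential $C_k$ together with its Kloosterman-type sum. For the smallest $k$ in the dominant class, this sum is a finite root-of-unity sum depending only on $n\bmod 6$ (or $n\bmod k$). I expect this to produce positive values for $n\equiv0,2,3$ and negative values for $n\equiv1,5\pmod6$. When the leading sum vanishes (which is exactly what happens at $n\equiv4$), one needs the next cusp, but Step 2 already covers that case.

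The hard part is making everything explicit: bounding the tail of the Rademacher-type series and the error from the other cusps with effective constants, so that the main term provably dominates for $n\ge N_0$ with a manageable $N_0$. Finally, check $n<N_0$ by computing $c(n)$ directly from the product in Step 1.
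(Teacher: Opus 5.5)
Your Step 1 is the paper's argument in Section~\ref{sec:combinatorics}, and Step 3 follows the paper's circle-method route. However, Step 3 as you describe it (one dominant class, one smallest $k$) cannot produce the sign pattern, and the sign pattern is exactly what you leave as ``I expect''.

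There is no single dominant cusp here. Two minor points first. $F$ has weight $\frac12(1-2-3)=-2$, not $-1$, so the main terms look like $n^{-3/2}I_3(\cdot)$. Also, set $\lambda_k:=\gcd(k,2)^2+\gcd(k,3)^2-1$; since $\lambda_k\ge1$, $F$ has a pole at every cusp, so there are no ``$O(n^{\varepsilon})$'' cusps.

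The Bessel function attached to $h/k$ has argument $\frac{\pi\sqrt{\lambda_k}}{6k}\sqrt{24n+O(1)}$. Now $\lambda_k/k^2=1$ for $k=1,2,3$, while $\lambda_k/k^2\le\frac13$ for every other $k$. So three terms share the maximal growth.

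The $k=1$ term has Kloosterman sum $1$ for all $n$, so by itself it would predict $c(n)>0$ always, yet $c(1)=-1$. The mod-$6$ pattern, including the cancellation at $n\equiv4$, comes only from adding the $k=1,2,3$ terms. These are in the ratio $1:2(-1)^n:3\varepsilon(n)$, with $\varepsilon(n)=1,-1,0$ for $n\equiv0,1,2\pmod3$. This gives $6,-4,3,2,0,-1$ for $n\equiv0,1,\dots,5\pmod 6$. Evaluating the $k=2$ and $k=3$ sums is the content of Propositions~\ref{prop:I2} and~\ref{prop:I3}; without it, Step 3 proves no inequality.

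Your Step 2 is correct, and it takes a different route from the paper, which sieves $F(2z)$, multiplies by a power of $\Delta$, and checks vanishing via the valence formula. The identity you did not supply is immediate. With $\omega=e^{2\pi i/3}$ we have $\psi(\omega q)=A(q^3)+\omega q\psi(q^9)$, so
\[
A(q^3)^3+q^3\psi(q^9)^3=\psi(q)\psi(\omega q)\psi(\omega^2q)=\frac{\psi(q^3)^4}{\psi(q^9)},
\]
because $\prod_{j=0}^2(\omega^jq;\omega^jq)_\infty=(q^3;q^3)_\infty^4/(q^9;q^9)_\infty$. With $A(q)=\frac{(q^2;q^2)_\infty(q^3;q^3)_\infty^2}{(q;q)_\infty(q^6;q^6)_\infty}$ this gives
\begin{align*}
\sum_{n\equiv0\ (3)}c(n)q^n&=\frac{(q^9;q^9)_\infty^3}{(q^3;q^3)_\infty(q^6;q^6)_\infty^6},\\
\sum_{n\equiv1\ (3)}c(n)q^n&=-q\,\frac{(q^{18};q^{18})_\infty^3}{(q^6;q^6)_\infty^7},\\
\sum_{n\equiv2\ (3)}c(n)q^n&=q^2S(q^3),\qquad S(x):=\frac{(x;x)_\infty(x^6;x^6)_\infty^6}{(x^3;x^3)_\infty^3(x^2;x^2)_\infty^8}.
\end{align*}

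These formulas settle all of Theorem~\ref{thm:combinatorics}, not just $n\equiv4$, so you can drop Step 3 entirely. Let $T(x)=(x^3;x^3)_\infty^3/(x;x)_\infty=1+x+2x^2+\cdots$ be the $3$-core generating function, which has nonnegative coefficients.

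\begin{itemize}
\item With $x=q^3$, the first series is $T(x)(x^2;x^2)_\infty^{-6}$.
\item With $y=q^6$, the second is $-q\,T(y)(y;y)_\infty^{-6}$.
\item Using $(-x;-x)_\infty=\frac{(x^2;x^2)_\infty^3}{(x;x)_\infty(x^4;x^4)_\infty}$ and its analogue for $x^3$, one gets $S(-x)=T(x)T(x^4)(x^2;x^2)_\infty^{-5}(x^6;x^6)_\infty^{-3}$.
\end{itemize}

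Since $T$ has coefficient $1$ at $x^0$ and at $x^1$, each of these three series has strictly positive coefficients. Hence $c(n)>0$ for $n\equiv0,3$, $c(n)<0$ for $n\equiv1$, and $c(n)=0$ for $n\equiv4$. Also $(-1)^kc(3k+2)>0$, which covers $n\equiv2,5\pmod6$.

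Compared with the paper, this proof is exact: it needs no error bounds and no computer check. The only extra information the circle method gives is the asymptotic size of $c(n)$.
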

In order to prove Theorem \ref{thm:combinatorics}, we investigate the generating function for the difference, which turns out to be infinite product related to an $\eta$-quotient. Consider a positive integer $m$ and a sequence of integers $\kappa_1, \kappa_2,...,\kappa_m$. We begin by defining an infinite product:
\begin{equation}\label{eqn:Gkappadef} 
G_{\kappa}(q) := \prod_{k=1}^{m}(q^k;q^k)^{\kappa_k}_\infty= \prod_{k=1}^m \prod_{n=1}^{\infty}\left(1-q^{nk}\right)^{\kappa_k}.
\end{equation}
Here, $(a;q)_\infty = \prod_{j=0}^{\infty}(1-aq^j)$ is the standard infinite 
$q-$Pochhammer symbol. This product can be expanded as a power series in $q:$
\[
G_{\kappa}(q) = \sum_{n \geq 0}A_{\kappa}(q) q^n.
\]
The sign sequence, denoted by $\sgn(A_{\kappa}(n))$ for different $\kappa$  has been a subject of significant investigation in the literature, e.g. \cite{Andrew}, \cite{Daniel}, \cite{Sch & Zhou}, \cite{Ben}. Theorem \ref{thm:combinatorics} is equivalent to a single case of a conjecture formulated in \cite{Ben} for purely periodic sign changes $\pmod{6}$. Let $\tau \in \H$, define
\[F(\tau) = \frac{\eta(\tau)}{{\eta(2\tau)}^2 {\eta(3\tau)}^3}\]
where $\eta(\tau)$ denotes the Dedekind eta function:
\[\eta(\tau) = q^{\frac{1}{24}} \prod_{n = 1}^{\infty} (1 - q^n), \;\;\;\;\;\;\; q = e^{2\pi i \tau}.\]
One sees directly that 
\[
F(\tau)= q^{-\frac{1}{2}}G_{(1,-2,-3)}(q),
\]
i.e., $F$ is essentially \eqref{eqn:Gkappadef} for $\kappa_1=1$, $\kappa_2=-2$, and $\kappa_3=-3$. We abbreviate $G:=G_{(1,-2,-3)}$ throughout and write 
\[
G(q)=:\sum_{n\geq 0} c(n) q^n.
\]
We show in Section \ref{sec:combinatorics} that $c(n)$ counts the difference $r_{\operatorname{e}}(n)-r_{\operatorname{o}}(n)$, so Theorem \ref{thm:combinatorics} then follows from the following result. 
\begin{thm}\label{thm 1}
    We have
    \begin{align*}
        \sgn(c(n)) =
        \begin{cases}
            1 & \text{if}\; n\equiv 0, 2, 3 \pmod 6,\\
            -1 & \text{if}\; n\equiv 1, 5 \pmod 6,\\
            0 & \text{if}\; n\equiv 4 \pmod 6.
        \end{cases}
    \end{align*}
\end{thm}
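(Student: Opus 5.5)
We prove Theorem \ref{thm 1} in two parts. The vanishing for $n\equiv 4\pmod 6$ is an exact identity. The sign pattern in the other classes comes from an asymptotic formula for $c(n)$, obtained by the Hardy--Ramanujan--Rademacher circle method and made effective, together with a finite computer check for small $n$.

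\textbf{Step 1: the exact vanishing for $n\equiv 4 \pmod 6$.} We split $G(q)=\sum_{j=0}^{5} q^j G_j(q^6)$ into its residue classes modulo $6$. The factor $(q^3;q^3)_\infty^{-3}$ is already a series in $q^3$. For the remaining factor $(q;q)_\infty/(q^2;q^2)_\infty^2$ we use the known $3$-dissection of
\[
\frac{(q;q)_\infty}{(q^2;q^2)_\infty^{2}} = \frac{1}{\psi(q)},
\]
the reciprocal of Ramanujan's $\psi$. Since $\psi(q)=\Pi(q^3)+q\,\psi(q^9)$, we obtain $1/\psi(q)$ as a $3$-dissection with explicit eta-quotient components. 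An equivalent route is to check the vanishing identity as an identity of modular forms, using the Sturm bound for a suitable eta-quotient on $\Gamma_0(N)$. We can also sieve directly: the generating function of $c(6n+4)$ is obtained from $F$ by applying operators $U_6$ and twists by characters modulo $6$. We check that the component in the class $4 \bmod 6$ is identically zero, either by the dissection or by verifying enough coefficients to exceed the Sturm bound for the corresponding form on $\Gamma_0(36)$ or $\Gamma_0(72)$.

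\textbf{Step 2: the asymptotic formula.} We write
\[
c(n)=\int_0^1 G(e^{-2\pi y+2\pi i x})\,e^{-2\pi i n(x+iy)}\,dx
\]
and dissect the circle into Farey arcs of order $N\approx\sqrt n$. Near a cusp $h/k$ we apply the transformation law of $\eta$ to each factor $\eta(\tau)$, $\eta(2\tau)$, $\eta(3\tau)$. The exponential growth near $h/k$ is governed by
\[
\frac{1}{24}\Bigl(\frac{\gcd(k,1)^2}{1}-2\,\frac{\gcd(k,2)^2}{2}-3\,\frac{\gcd(k,3)^2}{3}\Bigr)
\]
(up to sign conventions), i.e. by the ``order of pole'' of $F$ at the cusp. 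Computing this for $\gcd(k,6)\in\{1,2,3,6\}$ shows that the dominant contributions come from the cusps with $6\mid k$ (and possibly $3\mid k$). There $F$ has its largest pole, with order proportional to $\frac{-1+4+9}{24}=\frac12$ in suitable normalisation. The major-arc contributions yield Bessel-function terms $I_{\nu}(\pi\sqrt{c\,n}/k)$ multiplied by Kloosterman-type sums built from Dedekind sums $s(h,k)$, $s(2h,k/\gcd)$, $s(3h,\cdot)$. For the leading $k$ ($k=6$, perhaps together with $k=3$), this exponential sum is evaluated explicitly. It equals an explicit function of $n\bmod 6$ that is positive for $n\equiv0,2,3$, negative for $n\equiv1,5$, and zero for $n\equiv4$, which is consistent with Step 1.

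\textbf{Step 3: effective bounds and the finite check.} We bound the remaining terms (larger $k$ of the same type, cusps with smaller pole order, and the minor-arc error) explicitly by $O(n^{A}e^{\pi\sqrt{c'n}})$ with $c'<c$. We then find an explicit $n_0$ beyond which the main term dominates, and verify the signs for $n<n_0$ by computing the coefficients of $G$ directly. The main obstacle is this step. We must evaluate the leading Kloosterman sums exactly, check that they vanish in no class other than $4 \bmod 6$, and then keep the explicit constants in the error terms small enough that $n_0$ is computationally reachable. This requires carefully tracking the Dedekind-sum phases and using the trivial bound $|A_k(n)|\le k$ on every non-leading term.
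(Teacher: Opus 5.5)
Your Step 1 is sound. The dissection route is an elementary alternative to the paper's sieve-plus-valence-formula check: from $\psi(q)\psi(\omega q)\psi(\omega^2 q)=\psi(q^3)^4/\psi(q^9)$ one gets that the part of $G$ on exponents $\equiv 1 \pmod 3$ is $-q\,(q^{18};q^{18})_\infty^3/(q^6;q^6)_\infty^7$, which only has exponents $\equiv 1 \pmod 6$.

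The genuine gap is in Step 2. You rank the cusps by the pole order $\frac{1}{24}(d_2^2+d_3^2-1)$, $d_j=\gcd(j,k)$, alone. But the size of the contribution of the arc at $h/k$ depends on this quantity divided by $k^2$. Near $\tau=\frac hk+\frac{iz}{k^2}$ one has $G\approx \mathrm{const}\cdot z^2\exp\left(\frac{\pi(d_2^2+d_3^2-1)}{12z}\right)$. This is integrated against $\exp\left(\frac{(24n-1)\pi z}{12k^2}\right)$, which gives $I_3\left(\frac{\pi}{6k}\sqrt{(d_2^2+d_3^2-1)(24n-1)}\right)$.

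The ratio $(d_2^2+d_3^2-1)/k^2$ equals $1$ for $k=1,2,3$. For every other $k$ it is at most $\frac13$, with $\frac13$ attained at $k=6$. So the cusps with $6\mid k$ are \emph{not} dominant. They contribute at most $I_3\left(\frac{\pi}{6\sqrt3}\sqrt{24n-1}\right)$, while each of $k=1,2,3$ contributes $I_3\left(\frac{\pi}{6}\sqrt{24n-1}\right)$.

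With your choice of main term, Step 3 fails outright. The $k=1$ and $k=2$ terms you would have to bound as errors are exponentially larger than your $k=6$ "main term". Your asserted evaluation of the $k=6$ sum is also wrong. With the phases of the paper's transformation formula it is $A_6(n)=2\cos\left(\frac{\pi}{6}+\frac{\pi n}{3}\right)$, i.e. $\sqrt3\,(1,0,-1,-1,0,1)$ for $n\equiv 0,\dots,5 \pmod 6$, which is not the theorem's pattern.

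No single cusp produces the sign pattern; it comes from adding three terms of equal growth. One has $A_1(n)=1$ and $A_2(n)=(-1)^n$. Also $A_3(n)=2\cos\left(\frac{\pi}{6}+\frac{2\pi n}{3}\right)$, which is $\sqrt3,-\sqrt3,0$ for $n\equiv 0,1,2 \pmod 3$. Including the factors from $d_2,d_3$ and the Bessel integral, the main term is $\left(12\sqrt3+24\sqrt3(-1)^n+36A_3(n)\right)\frac{\pi}{(24n-1)^{3/2}}I_3\left(\frac{\pi}{6}\sqrt{24n-1}\right)$. The bracket equals $\sqrt3\,(72,-48,36,24,0,-12)$ for $n\equiv 0,\dots,5 \pmod 6$.

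The zero at $n\equiv 4$ is the cancellation $12\sqrt3+24\sqrt3-36\sqrt3=0$ between three different cusps. Note that $k=3$ alone, which you hedge towards, vanishes for $n\equiv 2,5$ and is negative for $n\equiv 4$.

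With this corrected main term, the rest of your Step 3 is exactly the paper's route: the trivial bound $|A_k(n)|\le k$, explicit Bessel bounds, and a finite computer check.
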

In order to investigate $\sgn(c(n))$, one writes $G$ as a ratio of partition-generating functions and then uses the growth of the function to obtain an exact formula via the Circle Method. These methods were established by Hardy and Ramanujan \cite{H & R} to obtain an asymptotic formula for the number of partitions $p(n)$ of $n\in\N$ and extended by Rademacher to obtain an exact formula for $p(n)$ (see \eqref{part-asym}).

The paper is organized as follows. In Section \ref{sec:prelims}, we introduce weakly holomorphic modular forms and recall the modular properties of the partition generating function and some inequalities that are needed to bound the error from the Circle Method. In Section \ref{sec:mainproof}, we prove Theorem \ref{thm 1} for $n\not\equiv 4\pmod{6}$, and in Section \ref{sec:vanish} we establish the vanishing property for $n\equiv 4\pmod{6}$. Finally, we establish the connection with colored representations by triangular numbers and generalized pentagonal numbers in Section \ref{sec:combinatorics}, establishing Theorem \ref{thm:combinatorics}.

\section{Preliminaries}\label{sec:prelims}

\subsection{Weakly holomorphic modular forms}

Let $\Gamma \subseteq \mathrm{SL}_{2}(\mathbb{Z})$ be a congruence subgroup and let 
$\chi \colon \Gamma \to \mathbb{C}^{\times}$ be a multiplier system of weight $\kappa \in \mathbb{Z}$.
A function $F \colon \mathbb{H} \to \mathbb{C}$ is called a \emph{weakly holomorphic modular form of 
weight $\kappa$ on $\Gamma$ with multiplier $\chi$} if it satisfies the following three conditions.\\
\textbf{1. Modularity:} For every $\gamma = \left(\begin{smallmatrix}a & b \\ c & d\end{smallmatrix}\right) \in \Gamma$,
    \[
    F \big|_{\chi} \gamma(\tau) = \chi(\gamma) F(\tau),
    \]
    where the weight-$\kappa$ slash operator is defined by
    \[
    F \big|_{\chi} \gamma(\tau) := (c\tau + d)^{-\kappa} F(\gamma\tau), \qquad \gamma\tau = \frac{a\tau + b}{c\tau + d}.
    \]
\textbf{2. Holomorphy on $\mathbb{H}$:} $F$ is holomorphic on the upper half-plane $\mathbb{H}$.\\
\textbf{3. Meromorphy at the cusps:} For each cusp $\varrho$ of $\Gamma$, choose a matrix 
    $M_{\varrho} \in \mathrm{SL}_{2}(\mathbb{Z})$ with $M_{\varrho}(\infty) = \varrho$ and set 
    $F_{\varrho} := F \big|_{\kappa} M_{\varrho}$. Then $F_{\varrho}$ is periodic with some 
    period $\sigma_{\varrho} \in \mathbb{N}$ and therefore admits a Fourier expansion of the form
    \[
    F_{\varrho}(\tau) = \sum_{n \in \mathbb{Z}} c_{F,\varrho}(n) \, q^{n/\sigma_{\varrho}}, \qquad q := e^{2\pi i \tau}.
    \]
    We require that for every cusp $\varrho$ there exists a constant $n_{0} = n_{0}(\varrho) \in \mathbb{Z}$ such that 
    $c_{F,\varrho}(n) = 0$ for all $n < n_{0}$. In other words, $F$ has at most a pole at each cusp, i.e., 
    only finitely many negative-exponent terms appear in its Fourier expansion there.\\
If the cusp is $\infty$ (so that $M_{\varrho}$ is the identity), we simply write the expansion as
\[
F(\tau) = \sum_{n \gg -\infty} c_{F}(n) q^{n}.
\]
The finite collection of terms with $n < 0$ in the expansion of $F_{\varrho}$ is called the 
\emph{principal part} of $F$ at the cusp $\varrho$. A weakly holomorphic modular form whose principal part is trivial at all cusps is called a \emph{holomorphic modular form}; if it also 
vanishes at every cusp, then we call it a \emph{cusp form}.

\subsection{The partition generating function}
As noted in the introduction, $G(z)$ can be written as a ratio of partition-generating functions. The generating function for partitions, denoted as $P(q)$, is expressed as
\[
P(q) = \sum_{n\geq 0} p(n) q^n = \prod_{n\geq 1}\frac{1}{1-q^n}.
\]   
Thus we can write 

\begin{equation}\label{a}
q^{\frac{1}{2}}F(\tau)= G(q) = \frac{P^2(q^2) P^3(q^3)}{P(q)} = \sum_{n\geq 0} c(n) q^n.      
\end{equation}
For $k\in\N$ and $h\in\Z$ with $\gcd(h,k)=1$, consider $\tau=\frac{h}{k}+\frac{iz}{k^2}$ with $\mathfrak{R}(z)>0$ and let $h'\in\Z$ be chosen so that $hh'\equiv -1\pmod{k}$. Hardy and Ramanujan \cite[Lemma 4.31]{H & R} showed that the partition function satisfies the modular relation
\begin{equation}\label{eqn:Pmodular}
P\left(e^{\frac{2\pi i h}{k} - \frac{2\pi z}{k^2}}\right) = \omega_{h, k} ({\frac{z}{k}})^{\frac{1}{2}} \exp\left({\frac{\pi}{12z} - \frac{\pi z}{12k^2}}\right) P\left(e^{\frac{2\pi i h'}{k} - \frac{2\pi}{z}}\right),
\end{equation}
where 
\begin{equation}\label{omega}
  \omega_{h,k}=e^{\pi i s(h,k)},
\end{equation}
and $s(h,k)$ is the \begin{it}Dedekind sum\end{it} defined by
\begin{equation}\label{Dedsum}
  s(h,k)=\sum_{1\le j<k}\bigg(\frac{j}{k}-
    \left\lfloor\frac{j}{k}\right\rfloor-
    \frac{1}{2}\bigg)\bigg(\frac{jh}{k}-
    \left\lfloor\frac{jh}{k}\right\rfloor-\frac{1}{2}\bigg).
\end{equation}
Using the modular properties \eqref{eqn:Pmodular} of $P(q)$, Hardy and Ramanujan \cite{H & R} developed the Circle Method to obtain an asymptotic formula for $p(n)$ which was later improved by Rademacher \cite{R partition} to the exact formula
\begin{equation}\label{part-asym}
  p(n)=\frac{1}{\pi\sqrt{2}}
  \sum_{k\ge 0}k^{1/2}\sum_{\substack{h\!\!\pmod{k}\\
      \gcd(h,k)=1}}\omega_{h,k}e^{-\frac{2\pi i hn}{k}}
  \frac{\,d}{\,dn}\frac{\sinh\Big(\frac{\pi}{k}\sqrt{\frac{2}{3}(n-1/24)}\Big)}
  {\sqrt{n-1/24}},
\end{equation}
for all integers $n\ge 1$. In order to obtain Theorem \ref{thm 1}, we use the modular properties of $G$ that arise from \eqref{a} and \eqref{eqn:Pmodular} to obtain a similar exact formula for $c(n)$. We then write $c(n)$ as a main term plus an error term. In order to show that $\sgn(c(n))$ satisfies the claimed periodicity, we require some bounds on $I$-Bessel functions that are used to bound the error term.

\begin{lem}[\cite{Ben}, Lemma 2.4] \label{L:Bessel}\hspace{0cm}
		\begin{enumerate}
			\item For $0\le x<1$, $\kappa\in\ \mathbb{R}$ with $\kappa>-\frac12$, we have
			\begin{equation*}
				I_{\kappa}(x) \le \frac{2^{1-\kappa}x^{\kappa}}{\Gamma(\kappa+1)}.
			\end{equation*}
			\item For $x\geq 1$ and $\kappa\in\mathbb{R}$ with $\kappa>-\frac{1}{2}$, we have
			\begin{equation*}
				I_{\kappa}(x) \le \sqrt{\frac2{\pi x}} e^x.
			\end{equation*}
			\item For $\kappa\geq \frac{1}{2}$ and $x\ge 3$, we have
			\begin{equation*}
				I_{\kappa}(x) > \frac{2^{-\kappa}}{5\sqrt{\pi }\Gamma\left(\kappa+\frac{1}{2}\right)}\frac{e^x}{\sqrt{x}},
			\end{equation*}
		\end{enumerate}
	\end{lem}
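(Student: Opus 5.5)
This lemma collects three standard inequalities for the modified Bessel function $I_\kappa$. I would argue each from one of the two classical representations:
\[
I_\kappa(x)=\sum_{m\ge0}\frac{(x/2)^{2m+\kappa}}{m!\,\Gamma(m+\kappa+1)},
\qquad
I_\kappa(x)=\frac{(x/2)^\kappa}{\sqrt{\pi}\,\Gamma(\kappa+\frac12)}\int_{-1}^{1}(1-t^2)^{\kappa-\frac12}e^{xt}\,dt,
\]
where the integral form is valid for $\kappa>-\frac12$.

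\emph{Part (1).} I would use the series.
\begin{itemize}
\item Since $\Gamma(m+\kappa+1)\ge \Gamma(\kappa+1)$ up to the harmless factor coming from $(\kappa+1)_m$ (equality at $m=0$), factor out $\frac{(x/2)^\kappa}{\Gamma(\kappa+1)}$.
\item The remainder is bounded by $\sum_m \frac{(x^2/4)^m}{m!\,(\kappa+1)_m}$.
\item For $x<1$ and $\kappa>-\frac12$ we have $(\kappa+1)_m\ge (1/2)_m$. Crudely bounding $\frac{1}{m!\,(1/2)_m}\le 1$ gives the geometric bound $\sum_m 4^{-m}=\frac43\le 2$.
\end{itemize}
This yields $I_\kappa(x)\le \frac{2\,(x/2)^\kappa}{\Gamma(\kappa+1)}=\frac{2^{1-\kappa}x^\kappa}{\Gamma(\kappa+1)}$.

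\emph{Part (2).} Here I would exploit that $I_\kappa(x)$ is decreasing in the order $\kappa$ for fixed $x>0$. This follows either by comparing the integral representations, or from the standard Nåsell/Soni-type inequalities. It reduces the claim to $\kappa\ge -\frac12$.
\begin{itemize}
\item At $\kappa=-\frac12$ we have exactly $I_{-1/2}(x)=\sqrt{\frac{2}{\pi x}}\cosh x\le \sqrt{\frac{2}{\pi x}}e^x$.
\item For $\kappa>-\frac12$, monotonicity then gives $I_\kappa(x)\le I_{-1/2}(x)$.
\end{itemize}
If monotonicity in $\kappa$ is not to be quoted, I would instead bound the integral representation directly: substitute $t=1-s$, use $e^{xt}=e^xe^{-xs}$, and compare with a Gamma integral. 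This step, getting a uniform constant for all $\kappa$ (including large $\kappa$, where $\Gamma$ factors compete with powers of $x$), is the main obstacle. I would handle it by first settling $-\frac12<\kappa\le\frac12$ directly, then using the recurrence/monotonicity for larger $\kappa$.

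\emph{Part (3).} I would use the integral form, restrict to $t\in[1-\frac1x,1]$, and substitute $t=1-s$ with $0\le s\le 1/x$.
\begin{itemize}
\item On this range $e^{xt}\ge e^{x-1}$ and $1-t^2=s(2-s)\ge s$, since $\kappa\ge\frac12$ makes the exponent $\kappa-\frac12\ge0$.
\item The integral is therefore at least
\[
e^{x-1}\int_0^{1/x}s^{\kappa-\frac12}\,ds=\frac{e^{x-1}\,x^{-\kappa-\frac12}}{\kappa+\frac12}.
\]
\item Multiplying by the prefactor gives $\frac{2^{-\kappa}e^{x}}{\sqrt{\pi}\,\Gamma(\kappa+\frac12)\sqrt{x}}\cdot\frac{1}{e(\kappa+\frac12)}$.
\end{itemize}
This loses a factor $1/(\kappa+\frac12)$ relative to the claimed bound. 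To avoid that loss I would instead integrate over $s\in[0,c]$ for a fixed $c$, using $s(2-s)\ge s$ and $e^{-xs}$, which gives $\int_0^\infty s^{\kappa-\frac12}e^{-xs}\,ds$ minus a tail. The main term is $\Gamma(\kappa+\frac12)x^{-\kappa-\frac12}$, which cancels the $\Gamma$ in the prefactor. With $x\ge3$ the truncation tail is controlled, and the constant $\frac15$ absorbs it.

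This is the step requiring care, since the Gamma cancellation only works if $\kappa$ is not too large relative to $x$. I would verify the constant numerically in the range of $\kappa$ actually used in the paper.
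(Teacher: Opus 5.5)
\textbf{Part (3).} This is the only part the paper actually proves; parts (1) and (2) are quoted from the cited Lemma 2.4. Your representation and the substitution $t=1-s$ match the paper, but your argument does not give the inequality for all $\kappa\ge\frac12$, because you aim at the wrong target. Writing $s=\frac ux$ gives
\[
I_\kappa(x)=\frac{2^{-\kappa}e^x}{\sqrt{\pi x}\,\Gamma\left(\kappa+\frac12\right)}\int_0^{2x}\left(2u-\frac{u^2}{x}\right)^{\kappa-\frac12}e^{-u}\,du ,
\]
so (3) says exactly that the last integral exceeds $\frac15$.

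The factor $\Gamma(\kappa+\frac12)$ stays in the denominator of the claimed bound, so no ``Gamma cancellation'' is needed. Requiring your truncated integral $\gamma(\kappa+\frac12,cx)$ to be comparable to $\Gamma(\kappa+\frac12)$ is stronger, and false when $\kappa\gg x$. That is what pushes you to a numerical check for particular $\kappa$, which does not prove the lemma as stated.

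Your first attempt fails for a different reason. The window $s\in[0,\frac1x]$ is $u\in[0,1]$, exactly where the $\kappa$-dependent factor $u^{\kappa-\frac12}$ is $\le1$ and shrinks as $\kappa$ grows. This produces the loss $\frac{1}{e(\kappa+\frac12)}$, already below $\frac15$ at $\kappa=3$, the case the paper needs.

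The missing idea is to integrate over a window where the base of the $(\kappa-\frac12)$-th power is at least $1$, so the power is $\ge1$ uniformly in $\kappa\ge\frac12$. The paper uses $u\in[3-\sqrt6,1]$, where $2u-\frac{u^2}{x}\ge 2u-\frac{u^2}{3}\ge 1$ for $x\ge3$. This gives $\int_{3-\sqrt6}^1e^{-u}\,du=e^{\sqrt6-3}-e^{-1}>\frac15$.

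Even your crude bound $s(2-s)\ge s$ suffices with the window $u\in[1,3]$, i.e.\ $s\in[\frac1x,\frac3x]\subseteq[0,1]$. There $u^{\kappa-\frac12}\ge 1$, giving $\int_1^3e^{-u}\,du=e^{-1}-e^{-3}>\frac15$. Equivalently, your second attempt with $c=1$ works once you use $\gamma(\kappa+\frac12,x)\ge e^{-1}-e^{-3}$ for $x\ge3$, rather than comparing with $\Gamma(\kappa+\frac12)$.

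\textbf{Part (2).} Your reduction rests on a false step: $\kappa\mapsto I_\kappa(x)$ is not decreasing on $(-\frac12,\infty)$. For example, $I_0(1)\approx 1.266$ while $I_{-1/2}(1)=\sqrt{2/\pi}\cosh 1\approx1.231$. The standard monotonicity in the order is a statement for $\kappa\ge0$ and can fail on $(-\frac12,0)$. So ``$I_\kappa\le I_{-1/2}$'' is unavailable, and the fallback you sketch is not carried out.

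For $\kappa\ge\frac12$, the display above with $\left(2u-\frac{u^2}{x}\right)^{\kappa-\frac12}\le(2u)^{\kappa-\frac12}$ gives directly $I_\kappa(x)\le e^x/\sqrt{2\pi x}$, which is stronger than needed. The range $-\frac12<\kappa<\frac12$ still needs its own argument or the citation.

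\textbf{Part (1).} The termwise bound $\frac{1}{m!\,(1/2)_m}\le1$ is wrong at $m=1$, where the value is $2$. This is harmless, since $\sum_{m\ge0}\frac{(x^2/4)^m}{m!\,(1/2)_m}=\cosh x\le\cosh 1<2$ for $x<1$.
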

\begin{proof}
The proof follows \cite[Lemma 2.4]{Ben}. For part (3), we use the integral representation 
\begin{equation}\label{eqn:Iint}
	I_{\kappa}(x)=\frac{\left(\frac{x}{2}\right)^{\kappa}}{\sqrt{\pi}\Gamma\left(\kappa+\frac{1}{2}\right)} \int_{-1}^1\left(1-u^2\right)^{\kappa-\frac{1}{2}}e^{xu} du.
\end{equation}
We take $u\mapsto 1-u$ and then $u\mapsto \frac{u}{x}$ to obtain 
\begin{align*}
I_{\kappa}(x)&=\frac{\left(\frac{x}{2}\right)^{\kappa}e^x}{\sqrt{\pi}\Gamma\left(\kappa+\frac{1}{2}\right)} \int_{0}^2\left(2u-u^2\right)^{\kappa-\frac{1}{2}}e^{-xu} du\\
&=
\frac{\left(\frac{x}{2}\right)^{\kappa}e^x}{x^{\kappa+\frac{1}{2}}\sqrt{\pi}\Gamma\left(\kappa+\frac{1}{2}\right)} \int_{0}^{2x}\left(2u-\frac{u^2}{x}\right)^{\kappa-\frac{1}{2}}e^{-u} du\\
&\geq \frac{2^{-\kappa}e^x}{\sqrt{\pi x}\Gamma\left(\kappa+\frac{1}{2}\right)}
 \int_{3-\sqrt{6}}^{1}\left(2u-\frac{u^2}{x}\right)^{\kappa-\frac{1}{2}}e^{-u} du\\
&\geq \frac{2^{-\kappa}e^x}{\sqrt{\pi x}\Gamma\left(\kappa+\frac{1}{2}\right)} \int_{3-\sqrt{6}}^{1}e^{-u} du=\frac{2^{-\kappa}e^x\left(e^{\sqrt{6}-3}-e^{-1}\right)}{\sqrt{\pi x}\Gamma\left(\kappa+\frac{1}{2}\right)},
\end{align*}
where $\kappa\geq \frac{1}{2}$, $x\geq 3$,  and $2u-\frac{u^2}{3}\geq 1$ is used in the last inequality. We then bound $e^{\sqrt{6}-3}-e^{-1}>\frac{1}{5}$.
\end{proof}

\section{The Proof of Theorem \ref{thm 1}}\label{sec:mainproof}
\subsection{Modular transformation for the generating function}
In order to state the modularo properties of $G$, set $d_j:=\gcd(j,k)$ and let $h_{j}'$ be the unique solution (we omit $j$ in the notation when $j=1$) to the congruence
\[
h h_{j}'\equiv -1\pmod{\frac{k}{d_j}}.
\]
\begin{prop}\label{Pr 1}
Suppose that $k\in\N$ and let $h\in \mathbb{Z}$ satisfy $\gcd(h, k) = 1.$  Then for all $\mathfrak{R}(z) >0$, we have 
\[
G\left(e^{\frac{2\pi i h}{k} - \frac{2\pi z}{k^2}}\right) = \frac{2\cdot3\cdot z^2}{k^2 d_2 d_3^{\frac{3}{2}}}\frac{\omega_{\frac{2h}{d_2}, \frac{k}{d_2}}^2 \omega_{\frac{3h}{d_3}, \frac{k}{d_3}}^3}{\omega_{h, k}} \exp\left({\frac{\pi (d_2^2 + d_3^2 -1)}{12z}} - {\frac{\pi z}{k^2}}\right)\hat{G}\left(h, k, e^{-\frac{\pi}{3z}}\right), 
\]
\vspace{0.5cm}
where
\begin{multline*}
 \hat{G}\left(h, k, e^{-\frac{\pi}{3z}}\right) := P^{-1}\left(\exp\left({\frac{2\pi i h'}{k} - \frac{2\pi}{z}}\right) \right) P^2\left(\exp\left({\frac{2\pi id_2 h'_{d_2}}{k} - \frac{2\pi d_2^2}{2z}}\right)\right)\\
 \times P^3\left(\exp \left({\frac{2\pi id_3 h'_{d_3}}{k} - \frac{2\pi d_3^2}{3z}}\right)\right).
 \end{multline*}
\end{prop}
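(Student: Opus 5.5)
Since $G(q)=P(q^2)^2P(q^3)^3/P(q)$ by \eqref{a}, the plan is to apply the Hardy--Ramanujan transformation \eqref{eqn:Pmodular} to each of the three factors $P(q)$, $P(q^2)$, $P(q^3)$ separately, and then multiply the results. For $P(q)$ this is immediate: with $q=e^{\frac{2\pi i h}{k}-\frac{2\pi z}{k^2}}$, \eqref{eqn:Pmodular} gives $P(q)=\omega_{h,k}(z/k)^{1/2}\exp\left(\frac{\pi}{12z}-\frac{\pi z}{12k^2}\right)P\left(e^{\frac{2\pi i h'}{k}-\frac{2\pi}{z}}\right)$, whose reciprocal produces the factors $\omega_{h,k}^{-1}$, $(z/k)^{-1/2}$, $e^{-\frac{\pi}{12z}+\frac{\pi z}{12k^2}}$ and $P^{-1}(\cdots)$.

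For $P(q^j)$ with $j\in\{2,3\}$, I rewrite $q^j=\exp\left(\frac{2\pi i (jh/d_j)}{k/d_j}-\frac{2\pi z_j}{(k/d_j)^2}\right)$ where $z_j:=\frac{jz}{d_j^2}$. Since $\gcd(jh/d_j,k/d_j)=1$ (as $\gcd(h,k)=1$ and $d_j=\gcd(j,k)$), this is exactly of the form required in \eqref{eqn:Pmodular} with $(h,k,z)$ replaced by $(jh/d_j,k/d_j,z_j)$, and $\mathfrak{R}(z_j)>0$. The relevant inverse is $H$ with $\frac{jh}{d_j}H\equiv-1\pmod{k/d_j}$. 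The key point is to identify $e^{2\pi i H/(k/d_j)}=e^{2\pi i d_jH/k}$ with $e^{2\pi i d_j h'_{d_j}/k}$ in the notation of the statement: I would note that $j/d_j$ is coprime to $k/d_j$, so I can choose the representative so that $H$ relates to $h'_{d_j}$ via $(j/d_j)H\equiv h'_{d_j}$ \emph{modulo} $k/d_j$. This is the main place needing care: the statement writes $d_jh'_{d_j}$ in the exponent, and one must check the precise convention (possibly $h'_{d_j}$ is intended as the inverse of $jh/d_j$ rather than $h$; I would adopt whichever convention makes the exponent exactly $\frac{2\pi i d_j H}{k}$, which is what the transformation yields). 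The $\tau$-side exponent becomes $-\frac{2\pi}{z_j}=-\frac{2\pi d_j^2}{jz}$, matching the arguments in $\hat G$.

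It remains to collect the prefactors. The algebraic factors are $(z_j d_j/k)^{1/2}=\left(\frac{jz}{d_jk}\right)^{1/2}$, so the total is $(z/k)^{-1/2}\left(\frac{2z}{d_2k}\right)\left(\frac{3z}{d_3k}\right)^{3/2}$, which yields $z^2k^{-2}\cdot 2\cdot 3^{3/2}d_2^{-1}d_3^{-3/2}$; I would compare this against the constant $2\cdot 3$ in the statement and adjust (I expect the correct constant to carry $3^{3/2}$, so this bookkeeping must be checked). The exponentials: $\frac{\pi}{12z_j}=\frac{\pi d_j^2}{12jz}$ with multiplicities $2$ and $3$ give $\frac{\pi d_2^2}{12z}+\frac{\pi d_3^2}{12z}$, and subtracting $\frac{\pi}{12z}$ gives $\frac{\pi(d_2^2+d_3^2-1)}{12z}$. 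The $z$-terms: $-\frac{\pi z_j}{12(k/d_j)^2}=-\frac{\pi jz}{12k^2}$, giving $-\frac{\pi z}{12k^2}(2\cdot2+3\cdot3-1)=-\frac{\pi z}{k^2}$. The roots of unity give $\omega^2_{2h/d_2,k/d_2}\omega^3_{3h/d_3,k/d_3}/\omega_{h,k}$.

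The only nontrivial step is the identification of the inverse residues and the exact constant; everything else is direct substitution.
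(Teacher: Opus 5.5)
Your proposal is correct and follows the paper's own argument: rewrite $q^j$ as $\exp\bigl(\frac{2\pi i (jh/d_j)}{k/d_j}-\frac{2\pi (jz/d_j^2)}{(k/d_j)^2}\bigr)$, apply \eqref{eqn:Pmodular} to each factor of $P^{-1}(q)P^2(q^2)P^3(q^3)$, and collect the prefactors. Both discrepancies you flag are genuine slips in the statement, which your adjustments correct: the paper's final line and its later computations carry the constant $2\cdot 3^{3/2}$, and the residue in the $P(q^j)$ factor must satisfy $\frac{jh}{d_j}h'_{d_j}\equiv -1\pmod{k/d_j}$ rather than the literally stated $hh'_{d_j}\equiv -1$, a point the paper's proof passes over silently.
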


\begin{proof}
Since all functions involved in the proposition are holomorphic for $\mathfrak{R}(z) > 0 $, by the identity theorem it suffices to assume that $z>0$ is real.

We write 
\begin{multline}\label{eqn:Grewrite}
 G\left(e^{\frac{2\pi i h}{k} - \frac{2\pi z}{k^2}}\right) = P^{-1}\left(e^{\frac{2\pi i h}{k} - \frac{2\pi z}{k^2}}\right) P^2 \left( \exp \left(\frac{2\pi i (\frac{2h}{d_2})}{\frac{k}{d_2}} - \frac{2\pi (\frac{2z}{d_2^2})}{({\frac{k}{d_2}})^2}\right)\right)\\
 \times P^3 \left( \exp \left(\frac{2\pi i (\frac{3h}{d_3})}{\frac{k}{d_3}} - \frac{2\pi (\frac{3z}{d_3^2})}{({\frac{k}{d_3}})^2}\right)\right)
\end{multline}
and note that $\frac{jh}{d_j}$ and $\frac{k}{d_j}$ are relatively prime by construction. We may hence employ the modular transformation \eqref{eqn:Pmodular} to obtain 
\begin{align*}
  G\left(e^{\frac{2\pi i h}{k} - \frac{2\pi z}{k^2}}\right) = &\frac{1}{\omega_{h, k}}   \left(\frac{z}{k}\right)^{-\frac{1}{2}}   \exp\left({\frac{-\pi}{12z} + \frac{\pi z}{12k^2}}\right) P^{-1}\left(\exp{\frac{2\pi i h'}{k} - \frac{2\pi}{z}}\right)\\
  &\hspace{-1cm}\times  \omega_{\frac{2h}{d_2}, \frac{k}{d_2}}^2 \left( \frac{\frac{2z}{d_2^2}}{\frac{k}{d_2}}\right )
 \exp \left({\frac{2\pi}{12 ( \frac{2z}{d_2^2})} - \frac{2\pi \left(\frac{2z}{d_2^2}\right)}{12 \left(\frac{k}{d_2}\right)^2}} \right) 
 P^2\left(\exp\left({\frac{2\pi id_2 h'_{d_2}}{k} - \frac{2\pi d_2^2}{2z}}\right)\right)
 \\
 &\hspace{-1cm}\times \omega_{\frac{3h}{d_3}, \frac{k}{d_3}}^3\left( \frac{\frac{3z}{d_3^2}}{\frac{k}{d_3}}\right )^{\frac{3}{2}} \exp \left({\frac{3\pi}{12 ( \frac{3z}{d_3^2})} - \frac{3\pi \left(\frac{3z}{d_3^2}\right)}{12 \left(\frac{k}{d_3}\right)^2}} \right)   
 P^3\left(\exp \left({\frac{2\pi id_3 h'_{d_3}}{k} - \frac{2\pi d_3^2}{3z}}\right)\right)
\end{align*}
$$
= \frac{2\cdot{3}^{3/2}\cdot z^2}{k^2 d_2 {d_3}^{\frac{3}{2}}}(\omega_{h, k})^{-1} \left(\omega_{\frac{2h}{d_2}, \frac{k}{d_2}}\right)^2 \left(\omega_{\frac{3h}{d_3}, \frac{k}{d_3}}\right)^3 \exp\left({\frac{\pi (d_2^2 + d_3^2 -1)}{12z}} - {\frac{\pi z}{k^2}}\right) \hat{G}\left(h, k, e^{-\frac{\pi}{3z}}\right).
$$
This completes the proof.
\end{proof}
\subsection{Hardy--Ramanujan--Rademacher expansion for the Fourier coefficients of \texorpdfstring{$G(q)$}{G(q)}}

Based on Rademacher's insights \cite{R partition}, for $n,N\in\N$ we have
\begin{equation}\label{b}
c(n) = \sum_{1\leq k \leq N}\;\; {\sum_{\substack{{0 \leq h<k} \\
{\gcd(h, k) = 1}}}} {\frac{i}{k^2}e^{\frac{-2\pi i h n}{k}}} \int_{z_1}^{z_2} G\left( \exp\left({\frac{2\pi i h}{k} - \frac{2\pi z}{k^2}}\right)\right) \exp \left( \frac{2\pi n z}{k^2}\right) dz.
\end{equation}
Here, $z$ moves along an arc of the circle defined by $C :  |z-\frac{1}{2}| = \frac{1}{2}$ with $\mathfrak{R}(z)>0$. The endpoints of this arc, labeled as $z_1$
and $z_2$ are highlighted in Figure 1 and given by

\begin{equation}\label{c}
 z_1 = \frac{k^2}{k^2 + k_1^2} + i\frac{kk_1}{k^2 + k_1^2} \;\; \;\;\text{and} \;\;\; z_2 = \frac{k^2}{k^2 + k_2^2} + i\frac{kk_2}{k^2 + k_2^2},
\end{equation}

\vspace{0.5cm}
\noindent respectively. Here $h_1/k_1 < h/k < h_2/k_2$ are three consecutive Farey fractions in the Farey sequence of order $N$.
Applying Proposition \ref{Pr 1} to equation \eqref{b} results in 

\begin{multline*}
c(n) = \left(\sum_{\substack{1\leq k \leq N \\  2\nmid k \;\; \text{or}\;\; 3\nmid k}} + \sum_{\substack{1\leq k \leq N \\  2\mid k \;\; \text{and}\;\; 3\mid k}} \right) {\sum_{\substack{{0 \leq h<k} \\
{\gcd(h, k) = 1}}}} \frac{i\cdot2\cdot(3)^{3/2}}{k^4 d_2 d_3^{3/2}} \frac{\omega_{\frac{2h}{d_2}, \frac{k}{d_2}}^2\omega_{\frac{3h}{d_3}, \frac{k}{d_3}}^3}{\omega_{h,k}}
e^{\frac{-2\pi i h n}{k}} 
\\
\times \int_{z_1}^{z_2} z^2 \exp\left( \frac{\pi (d_2^2 + d_3^2 -1)}{12z} +\frac{(24n-1)\pi z}{12k^2}\right) \times \hat{G} \left( h, k, e^{\frac{-\pi}{3z}} \right) \, dz.
\end{multline*}
\qquad $\;\;\;\; =: I + E.$\\
For the first sum $I$, we have  $2\nmid k$ or $3\nmid k$, leading us to three possible scenarios
\[I = \sum_{\substack{1\leq k \leq N \\  2\nmid k \;\; \text{or}\;\; 3\nmid k}}\;\;\; {\sum_{\substack{{0 \leq h<k} \\
{\gcd(h, k) = 1}}}}  \frac{i\cdot2.(3)^{3/2}}{k^4 d_2 d_3^{3/2}} \left( \omega_{h, k}\right)^{-1}\left( \omega_{\frac{2h}{d_2}, \frac{k}{d_2}} \right)^2 \left( \omega_{\frac{3h}{d_3}, \frac{k}{d_3}} \right)^3 e^{\frac{-2\pi i h n}{k}} \]

\[\times \int_{z_1}^{z_2} z^2 \exp\left( \frac{\pi (d_2^2 + d_3^2 -1)}{12z} -\frac{\pi z}{12k^2} +\frac{2\pi n z}{k^2}\right) \times \hat{G} \left( h, k, e^{\frac{-\pi}{3z}} \right)  dz.\] 

\begin{align*}
I = &\left(\sum_{\substack{1\leq k \leq N \\  2\nmid k \; \text{and}\; 3\nmid k}} + \sum_{\substack{1\leq k \leq N \\  2\mid k \; \text{and}\; 3\nmid k}} + \sum_{\substack{1\leq k \leq N \\  2\nmid k \; \text{and}\; 3\mid k}}\right)\;\;\; {\sum_{\substack{{0 \leq h<k} \\
{\gcd(h, k) = 1}}}}  \frac{i\cdot2.(3)^{3/2}}{k^4 d_2 d_3^{3/2}} \left( \omega_{h, k}\right)^{-1}\left( \omega_{\frac{2h}{d_2}, \frac{k}{d_2}} \right)^2 \left( \omega_{\frac{3h}{d_3}, \frac{k}{d_3}} \right)^3 \\
&e^{\frac{-2\pi i h n}{k}}\times \int_{z_1}^{z_2} z^2 \exp\left( \frac{\pi (d_2^2 + d_3^2 -1)}{12z} + \frac{(24n-1)\pi z}{12k^2}\right) \times \hat{G} \left( h, k, e^{\frac{-\pi}{3z}} \right) dz.\\
&= I_1 + I_2 + I_3,
\end{align*}

We first consider $I_1$, writing it as a main term plus an error term.
\begin{prop}\label{prop:I1}
We have 
\[
I_1= 4\cdot3^{3/2} \pi (24n-1)^{-3/2} I_3\left( \frac{\pi}{6}  \sqrt{(24n-1)}\right) + \mathscr{E}_1
\]
where 
\[
\left|\mathscr{E}_1\right|\leq 175,200,000+\frac{\pi^2}{8\sqrt{3}(24n-1)}+\frac{2\sqrt{3}\pi^2}{(24n-1)} I_3\left(\frac{\pi}{30}\sqrt{24n-1}\right).
\]
\end{prop}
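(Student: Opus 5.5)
For $I_1$ we have $\gcd(k,6)=1$, so $d_2=d_3=1$. The exponent in the integrand becomes $\frac{\pi(d_2^2+d_3^2-1)}{12z}=\frac{\pi}{12z}$, and we expand $\hat G(h,k,e^{-\pi/(3z)})$ as a power series. With $d_2=d_3=1$ the three nomes are $e^{-2\pi/z}$, $e^{-\pi/z}$ and $e^{-2\pi/(3z)}$ (times roots of unity). Hence
\[
\hat G\left(h,k,e^{-\frac{\pi}{3z}}\right)=\sum_{m\ge 0} a_{h,k}(m)\, e^{-\frac{2\pi m}{3z}}, \qquad |a_{h,k}(m)|\le b(m),
\]
where $b(m)$ are the coefficients of $P(q^3)^{-1}\ldots$; more precisely, $b(m)$ are the coefficients of the product of $\prod(1+q^{3n})$, $P(q^{3/2})^2$ and $P(q)^3$ in $q=e^{-2\pi/(3z)}$ after rescaling, with nonnegative coefficients. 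The $m=0$ term has constant coefficient $1$ and exponent $\frac{\pi}{12z}>0$, so it gives the main term. Every term with $\frac{\pi}{12}-\frac{2\pi m}{3}<0$ (all $m\ge1$, after the correct bookkeeping of the fractional exponents) is non-principal and goes into the error. I would check whether any fractional index such as $m=\frac18$ still has a positive exponent. The $\frac{\pi}{30}$ argument in the stated error suggests one more principal term of exponent $\frac{\pi}{12}\cdot\frac{1}{25}$ appears, or that the error is a crude bound. I would determine this by directly listing the small exponents.

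For the principal term I follow Rademacher. I rewrite the integral over the arc from $z_1$ to $z_2$ as the full circle $C$ minus the two arcs from $0$ to $z_1$ and from $z_2$ to $0$. On the full circle, substituting $w=1/z$ turns $\int_C z^2 e^{\frac{\pi}{12z}+\frac{(24n-1)\pi z}{12k^2}}\,dz$ into a standard Bessel integral. This yields
\[
\frac{2\pi}{i}\left(\frac{k^2}{24n-1}\right)^{3/2} I_3\!\left(\frac{\pi\sqrt{24n-1}}{6k}\right)
\]
up to constants. Multiplying by $\frac{i\cdot 2\cdot 3^{3/2}}{k^4}$ and the Kloosterman-type sum $A_k(n)=\sum_h \omega_{h,k}^{-1}\omega_{2h,k}^2\omega_{3h,k}^3e^{-2\pi ihn/k}$ gives the term for each $k$. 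The $k=1$ term is exactly $4\cdot3^{3/2}\pi(24n-1)^{-3/2}I_3(\frac{\pi}{6}\sqrt{24n-1})$.

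The $k\ge5$ terms (the next $k$ coprime to $6$) are bounded using $|A_k(n)|\le k$ and Lemma \ref{L:Bessel}, parts (1) and (2). Note that $k=5$ gives argument $\frac{\pi}{30}\sqrt{24n-1}$, which explains the $I_3(\frac{\pi}{30}\sqrt{24n-1})$ term. For $k\ge 5$ I bound the Bessel function via its value at $k=5$ and monotonicity, and sum $\sum_k k\cdot k^{-4}k^{3}\cdot(\cdot)$ by a convergent tail. Equivalently, I use the power-series bound of part (1) when the argument is below $1$, which yields the $\frac{1}{24n-1}$ terms.

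The remaining pieces go into the constant $175{,}200{,}000$. These are the two small arcs near $0$ for the principal part, and all non-principal terms on the Farey arcs. On $C$ we have $\mathfrak R(1/z)=1$, $|z|^2\le 2k^2/N^2$, and the arc length is at most $2\sqrt2 k/N$. We also use $\mathfrak R(z)\le k^2/\ldots$, giving $e^{(24n-1)\pi\mathfrak R(z)/(12k^2)}\le e^{2\pi n/N^2}$. Taking $N=\lfloor\sqrt n\rfloor$ makes this bounded, and summing over $h,k$ yields $\sum_k k\cdot k^{-4}\cdot k^2N^{-2}\cdot kN^{-1}$, which is $O(1)$. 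The non-principal series is bounded by $\sum_m b(m)e^{-2\pi(m-\ldots)}$, which converges numerically. The main obstacle is this explicit numerical evaluation: bounding $\sum_m b(m)e^{-c m}$ by an explicit constant, which I would do by computing initial coefficients and tail-bounding with $p(m)\le e^{\pi\sqrt{2m/3}}$. That constant, inflated by crude arc-length factors, is what produces $175{,}200{,}000$.
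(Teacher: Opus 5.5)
Your architecture matches the paper's. You split $\hat G=1+(\hat G-1)$, write the arc from $z_1$ to $z_2$ as $C$ minus the two small arcs at $0$, evaluate $\int_C$ by Watson's formula, keep $k=1$, and bound the terms with $k\ge5$ by splitting at $k=\frac{\pi}{6}\sqrt{24n-1}$. For the $O(\sqrt n)$ values of $k$ below the cutoff you use the $k=5$ value, and Lemma \ref{L:Bessel}(1) for the rest; these are two halves of one split, not equivalent alternatives. All of that is fine.

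\textbf{The gap is in your treatment of the non-principal part.} This is $z^2\exp\bigl(\frac{\pi}{12z}+\frac{(24n-1)\pi z}{12k^2}\bigr)\bigl(\hat G-1\bigr)$, which you bound ``on the Farey arcs'' using $\mathfrak R(1/z)=1$, $|z|^2\le 2k^2/N^2$ and a path length $O(k/N)$.

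\begin{itemize}
\item By your own decomposition, the arc from $z_1$ to $z_2$ is the part of $C$ not containing $0$. That is the long arc through $z=1$, of length close to $\pi$.
\item On that arc $|z|$ and $\mathfrak R(z)=|z|^2$ reach $1$. For $k=1$ the factor $\exp\bigl(\frac{(24n-1)\pi\mathfrak R(z)}{12k^2}\bigr)$ therefore reaches $e^{(24n-1)\pi/12}$.
\item So any absolute-value estimate along that arc is exponentially larger than the main term, which has size about $e^{\frac{\pi}{6}\sqrt{24n-1}}$.
\end{itemize}

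The missing step is Rademacher's contour deformation, which the paper carries out. Since the integrand is holomorphic for $\mathfrak R(z)>0$, integrate the $(\hat G-1)$-part along the chord $s_{h,k}$ from $z_1$ to $z_2$. On the chord you have $\mathfrak R(1/z)\ge 1$, $0<\mathfrak R(z)<2k^2/N^2$, $|z|<\sqrt2k/N$ and $|s_{h,k}|<2k/(N+1)$. (The two short arcs through $0$ also work, since this integrand stays bounded near $0$ inside the disc.)

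Note that on the chord you only have $\mathfrak R(1/z)\ge1$, not $=1$. To replace $\mathfrak R(1/z)$ by $1$ you need every exponent $\frac{\pi}{12}-\frac{2\pi m}{3}$ with $m\ge1$ to be negative, which your termwise expansion supplies. Your listed estimates are exactly those valid on the chord, so the repair is local. As written, however, the step fails.

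Two smaller points.

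First, the inequality $e^{(24n-1)\pi\mathfrak R(z)/(12k^2)}\le e^{2\pi n/N^2}$ would need $\mathfrak R(z)\le k^2/N^2$, which is false in general. For example, take the neighbours $\frac34,\frac45$ in the Farey sequence of order $7$.
\begin{itemize}
\item What holds on the chord and on the short arcs is $\mathfrak R(z)\le\frac{k^2}{k^2+k_j^2}\le\frac{2k^2}{(N+1)^2}$, because $k+k_j\ge N+1$.
\item With $N=\lfloor\sqrt n\rfloor$ this caps the factor at $e^{4\pi}$. So does the paper's choice $N=\lceil\sqrt n\rceil$ together with $\mathfrak R(z)<2k^2/N^2$.
\item With this correction, your bound $|z|^2\le 2k^2/N^2$ is sharper than the paper's $|z|^2\le\frac{4k^4}{N^4}+\frac14$. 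It makes the two arc contributions and the chord contribution $O(N^{-2})$, well within $175{,}200{,}000$.
\end{itemize}

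Second, your hesitation about further principal terms is unnecessary. The smallest positive power of $e^{-2\pi/(3z)}$ occurring in $\hat G$ is the first power, coming from $P^3$. So only $m=0$ is principal, and the argument $\frac{\pi}{30}$ comes from $k=5$, as you eventually note.
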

We prove Proposition \ref{prop:I1} by spitting the integral into a number of pieces. For the first sum $I_1$, the relation $2\nmid k$ and $3\nmid k$ means $d_2 = \gcd(2, k)= 1$ and $d_3 = \gcd(3, k) = 1$, we now obtain the estimate for $I_1$,

\begin{figure}
    \centering
    \includegraphics[width=0.8\linewidth]{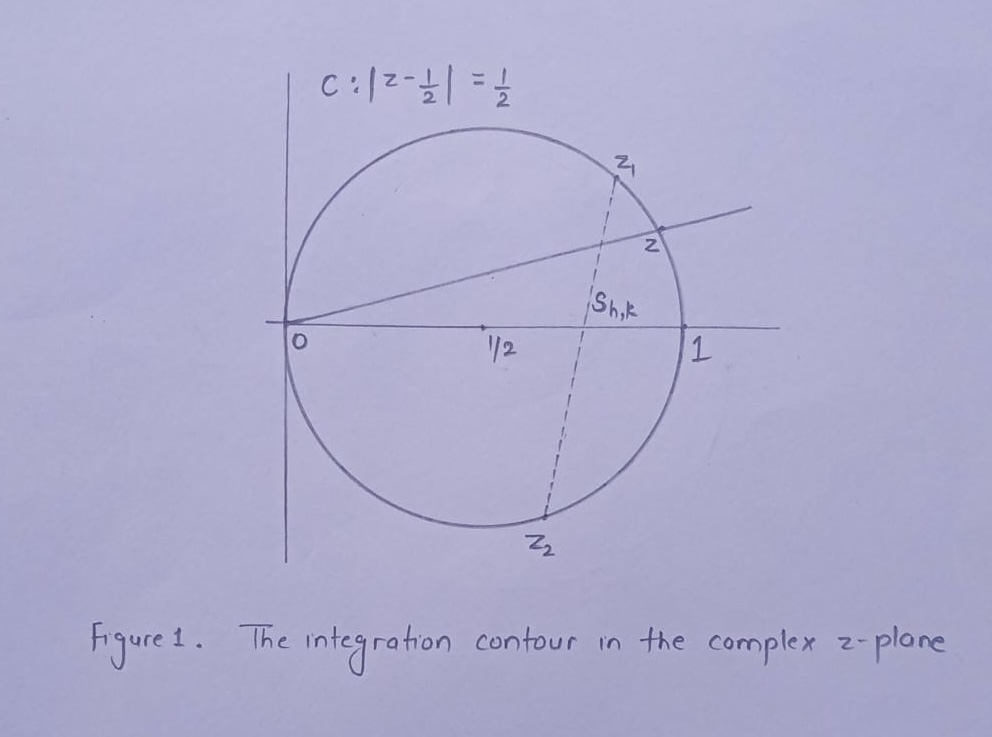}
   
    \label{fig:placeholder}
\end{figure}

\vspace{1cm}

\noindent
\begin{align*}
  I_1 &= \sum_{\substack{1\leq k \leq N \\  2\nmid k \;\; \text{and}\;\; 3\nmid k}}\;\;\; {\sum_{\substack{{0 \leq h<k} \\
{\gcd(h, k) = 1}}}} \frac{ i\cdot2\cdot(3)^{3/2}}{k^4 } \left( \omega_{h, k}\right)^{-1} \left( \omega_{2h, k}\right)^{2} \left( \omega_{3h, k}\right)^{3} e^{\frac{-2\pi i h n}{k}}\\
&\quad \times \int_{z_1}^{z_2} z^2 \exp\left( \frac{\pi}{12z} + \frac{(24n-1)\pi z}{12k^2}\right) \left( 1+\hat{G} \left( h, k, e^{\frac{-\pi}{3z}} \right)- 1\right) dz. \\ 
&=: I_{M_1} + I_{R_1}.
\end{align*}

\noindent
Here $I_{M_1}$ and $I_{R_1}$ come from splitting the last factor of the above equation in the integrand into $1$ and $\left(\hat{G} \left( h, k, e^{\frac{-\pi}{3z}} \right)- 1\right)$.  In the above sum, the path of integration is along the arc of the circle $C$,
the path of integration in $z-$plane can be moved so that  for estimating $I_{R_1}$, we integrate along the chord joining  $z_1$ and $z_2$.  On the chord $s_{h, k},$ from Rademacher \cite[eq.(119.3) and eq.(119.6)]{R analytic}, we have

\begin{equation}\label{Real(z)}
 1\leq \mathfrak{R}\left(\frac{1}{z}\right),  \;\;\;\;  0< \mathfrak{R}(z) < \frac{2k^2}{N^2},     
\end{equation}

\noindent
and from \cite{R partition}, the length of the chord $s_{h,k}$ is 

\begin{equation}\label{s(h,k)}
|s_{h, k} | < \frac{2k}{N+1}.    
\end{equation}
On the chord itself, we have $|z| \leq \max \{ |z_1|, |z_2|\} < \frac{\sqrt{2}k}{N},$ while on $C$ we have $\mathfrak{R}(1/z) = 1$ \cite{Apostol}.
Thus, we have

\begin{align*}
 |I_{R_1}| &\leq \sum_{\substack{1\leq k \leq N \\  2\nmid k \;\; \text{and}\;\; 3\nmid k}}\;\;\; {\sum_{\substack{{0 \leq h<k} \\
{\gcd(h, k) = 1}}}} \frac{2\cdot(3)^{3/2} |s_{h, k}|}{k^4} \\
&\hspace{4.3cm}\times \sup_{z\in s_{h, k} } \left| z^2 \exp\left( \frac{\pi}{12z} + \frac{(24n-1)\pi z}{12k^2}\right) \left( \hat{G} \left( h, k, e^{\frac{-\pi}{3z}} \right) -1 \right)\right|\\
&\leq \sum_{\substack{1\leq k \leq N \\  2\nmid k \;\; \text{and}\;\; 3\nmid k}}\;\;\; {\sum_{\substack{{0 \leq h<k} \\ {\gcd(h, k) = 1}}}} \frac{2\cdot(3)^{3/2}}{k^4} \times \frac{2k}{N+1} \times \left(\frac{4k^4}{N^4} +\frac{1}{4}\right)\times\exp\left(  \frac{(24n-1)\pi \mathfrak{R}(z)}{12k^2}\right) \\
&\hspace{7.5cm} \times \sup_{z\in s_{h, k} } \left|\exp\left(\frac{\pi}{12z}\right)  \left(\hat{G} \left( h, k, e^{\frac{-\pi}{3z}} \right) -1 \right) \right|.
\end{align*}

\noindent
For all $w_1, w_2$ and $w_3 \in \mathbb{C} $ with $|w_1|, |w_2|, |w_3| <1$, it is easy to show that 

\begin{equation}\label{bound w_1 w_2 w_3}
  \left|\left(P^{-1}(w_1) P^2(w_2) P^3(w_3)\right) -1\right| \leq P(|w_1|) P^2(|w_2|)P^3(|w_3|) - 1.  
\end{equation}

\noindent
Using this argument, we obtain the following. 

\begin{align*}
\sup_{z\in s_{h, k} } &\left|\exp\left(\frac{\pi}{12z}\right)  \left(\hat{G} \left( h, k, e^{\frac{-\pi}{3z}} \right) -1 \right) \right| \\
&\leq \sup_{z\in s_{h, k} } \left| \exp\left(\frac{\pi}{12}\mathfrak{R}\left(\frac{1}{z}\right)\right)\left( P \left( e^{-2\pi \mathfrak{R}\left(\frac{1}{z}\right)}\right)P^2 \left( e^{-\pi \mathfrak{R}\left(\frac{1}{z}\right)}\right)P^3 \left( e^{-\frac{2\pi}{3} \mathfrak{R}\left(\frac{1}{z}\right)}\right) - 1 \right)\right|\\
&\leq e^{\frac{\pi}{12}}\left(P \left( e^{-2\pi}\right)P^2 \left( e^{-\pi}\right)P^3 \left( e^{-\frac{2\pi}{3}}\right) - 1 \right).
 \end{align*}

\begin{rem}
    Although the prefactor $\exp\left(\frac{\pi}{12}\mathfrak{R}\left(\frac{1}{z}\right)\right)$ in the above inequality grows rapidly as $\mathfrak{R}\left(\frac{1}{z}\right) \to \infty$, it does not dominate the supremum. This is because the term in parentheses, $\left( P \left( e^{-2\pi \mathfrak{R}\left(\frac{1}{z}\right)}\right)P^2 \left( e^{-\pi \mathfrak{R}\left(\frac{1}{z}\right)}\right)P^3 \left( e^{-\frac{2\pi}{3} \mathfrak{R}\left(\frac{1}{z}\right)}\right) - 1 \right)$ vanishes exponentially faster. Subtracting 1 and multiplying by the external exponential factor yields a bounded quantity. Hence, the supremum is not dictated by the leading exponential but by the lower-order terms.

\end{rem}
 

\begin{lem}\label{bnd p(n)}
Based on above notation, we have
\begin{align*}
    \left|P\left(e^{-2\pi}\right) P^2\left(e^{-\pi}\right) P^3\left(e^{\frac{-2\pi}{3}}\right)\right| \leq 2.
\end{align*}
\end{lem}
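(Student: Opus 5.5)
The approach is to bound each factor $P(x)=\prod_{n\ge1}(1-x^n)^{-1}$ for $0<x<1$ by a closed-form expression via its logarithm, and then check numerically that the product stays below $2$. All three arguments $e^{-2\pi}$, $e^{-\pi}$ and $e^{-2\pi/3}$ are real and lie in $(0,1)$. Hence every factor is a positive real number, and the absolute value in the statement is harmless.

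\textbf{Step 1: a uniform bound on $P$.} For $0<x<1$, expand each logarithm $-\log(1-x^n)$ and swap the order of summation:
\[
\log P(x)=\sum_{n\ge1}\sum_{m\ge1}\frac{x^{nm}}{m}=\sum_{m\ge1}\frac{1}{m}\frac{x^m}{1-x^m}.
\]
Since $1-x^m\ge 1-x$ for every $m\ge1$, this gives
\[
\log P(x)\le\frac{1}{1-x}\sum_{m\ge1}\frac{x^m}{m}=\frac{-\log(1-x)}{1-x}.
\]
Consequently
\[
\log\left(P\left(e^{-2\pi}\right)P^2\left(e^{-\pi}\right)P^3\left(e^{-\frac{2\pi}{3}}\right)\right)\le \sum_{(a,x)}a\,\frac{-\log(1-x)}{1-x},
\]
where the sum runs over the three pairs $(a,x)=(1,e^{-2\pi}),\,(2,e^{-\pi}),\,(3,e^{-2\pi/3})$.

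\textbf{Step 2: numerical evaluation.} The three arguments are approximately $e^{-2\pi}\approx 0.00187$, $e^{-\pi}\approx 0.0432$ and $e^{-2\pi/3}\approx 0.1231$. Substituting these into the bound from Step 1 gives the following contributions.
\begin{itemize}
\item From $x=e^{-2\pi}$ with $a=1$: about $0.0019$.
\item From $x=e^{-\pi}$ with $a=2$: about $2\cdot 0.0462\approx 0.0923$.
\item From $x=e^{-2\pi/3}$ with $a=3$: about $3\cdot 0.1498\approx 0.4495$.
\end{itemize}
The total is about $0.544$. Therefore the product is at most $e^{0.544}\approx 1.72<2$.

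To make this rigorous, I would replace each decimal by a safe upper bound, for instance using $\pi>3.14$ to get $e^{-2\pi/3}<0.124$, $e^{-\pi}<0.044$ and $e^{-2\pi}<0.002$. There is then ample room below $\log 2\approx 0.693$.

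The only delicate point is the factor $P^3(e^{-2\pi/3})$, which contributes most of the total. If the crude bound $1-x^m\ge1-x$ were too lossy there, the fallback would be to keep the terms $m=1,2$ of the series exactly and bound only the tail. However, the margin computed above ($0.544$ against $0.693$) shows this refinement is unnecessary.
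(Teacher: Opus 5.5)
Your proof is correct, and it takes a different route from the paper's. The paper bounds each factor through the coefficient estimate $p(n)\le e^{\pi\sqrt{2n/3}}$. Beyond an unspecified threshold $n_0$ this gives $p(n)e^{-\pi\alpha n}\le e^{-\pi\alpha n/2}$, so $P(e^{-\alpha\pi})$ is at most a finite head $\sum_{n<n_0}p(n)e^{-\pi\alpha n}$ plus a geometric tail (for $\alpha=2/3$ one can take $n_0=6$). Evaluating these numerically gives the separate bounds $1.1$, $1.1$, $1.6$ for the three factors, with product $1.936<2$. You instead use the product structure directly: $\log P(x)=\sum_{m\ge1}\frac{1}{m}\frac{x^m}{1-x^m}$ together with $1-x^m\ge 1-x$ yields the closed form $\log P(x)\le -\log(1-x)/(1-x)$, so the lemma reduces to evaluating three elementary expressions. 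This makes your proof self-contained and fully explicit: there is no appeal to the growth bound on $p(n)$, no choice of $n_0$, and no truncated head sum to compute. It also gives a sharper constant, about $1.72$ (about $1.73$ with your safe decimals), against a true value of roughly $1.717$, whereas the paper reaches only $1.936$. The crude step $1-x^m\ge1-x$ costs almost nothing because the $m\ge2$ terms are tiny even at $x=e^{-2\pi/3}$. The paper's method is more generic, since it needs only a growth bound on the coefficients rather than a product expansion. When you pass to the safe decimal upper bounds for $x$, state explicitly that $x\mapsto-\log(1-x)/(1-x)$ is increasing on $(0,1)$. This is immediate, since both factors are positive and increasing there.
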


\begin{proof}
Using the well-known estimate of \cite{Apostol}.
\begin{equation}\label{p(n)}
    p(n) \leq e^{\pi \sqrt{\frac{2n}{3}}},
\end{equation}
for $n\geq n_0$ sufficiently large we obtain 
$$ p(n) e^{-\pi \alpha n}\leq e^{\pi \sqrt{\frac{2 n}{3}} -\pi\alpha n} \leq e^{-\frac{\pi \alpha n}{2}}.$$
Hence 
\[
1\leq P(e^{-\alpha \pi }) = \sum_{n\geq 0} p(n)e^{-\alpha\pi n}\leq \sum_{n=0}^{n_0-1}p(n) e^{-\pi \alpha n} + \sum_{n=n_0}^{\infty} e^{-\frac{\pi \alpha n}{2}}=\sum_{n=0}^{n_0-1}p(n) e^{-\pi \alpha n} + \frac{e^{-\frac{\pi\alpha n_0}{2}} }{1-e^{-\frac{\pi \alpha}{2}}}.
\]
Using this, we obtain 
\[
1\leq P(e^{-2\pi})\leq 1.1,\qquad 1\leq P^2(e^{-\pi})\leq 1.1,\quad\text{ and }1\leq P^3\left(e^{-\frac{2\pi}{3}}\right)\leq 1.6.
\]
Therefore
\[
\left|P\left(e^{-2\pi}\right) P^2\left(e^{-\pi}\right) P^3\left(e^{\frac{-2\pi}{3}}\right)\right| \leq 2.\qedhere
\]
\end{proof}

\noindent
By using Lemma \ref{bnd p(n)} and equation \eqref{Real(z)}, the estimate bound for $E_2$ is

\[ |I_{R_1}| \leq  \sum_{\substack{1\leq k \leq N \\ 2\nmid k \;\; \text{and}\;\; 3\nmid k}}\;\;\; {\sum_{\substack{{0 \leq h<k} \\ {\gcd(h, k) = 1}}}} \frac{17.3^{3/2}}{k^3 N} \exp\left( \frac{\pi}{12} + \frac{(24n-1)\pi }{6N^2}\right). \]

\vspace{1em}
\noindent
Using the fact that $\phi(k) \leq k$ and fixing $N = \lceil\sqrt{n}\rceil$, we have

\[|I_{R_1}| \leq  \sum_{\substack{1\leq k \leq N \\ 2\nmid k \;\; \text{and}\;\; 3\nmid k}} \frac{17\cdot3^{3/2}}{k^2 \sqrt{n}}\exp \left(\frac{\pi}{12} + 4\pi \right) \leq {17\cdot3^{3/2}}\zeta(2)\exp \left(\frac{49\pi}{12}  \right). \]

\noindent
After all calculations, we have
\begin{equation}\label{eqn:E2Bound}
    |I_{R_1}| \leq 54,200,000.
\end{equation}

\noindent
To evaluate $I_{M_1}$ we split the integral into two parts $I_{MM_1}$ and $I_{ME_1}$, as indicated below; the path of integration of the first part is the whole circle $C$, traversed from 0 to 0 in negative direction, while in the second part the arc of the circle is traversed from $z_1$ to $z_2$ (which itself is split into a difference of two integrals with respective paths of integration along the arc starting at 0). Specifically, we have

\begin{align*}
  I_{M_1} = &\sum_{\substack{1\leq k \leq N \\  2\nmid k \;\; \text{and}\;\; 3\nmid k}}\;\;\; {\sum_{\substack{{0 \leq h<k} \\
{\gcd(h, k) = 1}}}} \frac{ i\cdot2 \cdot(3)^{3/2}}{k^4 } \left( \omega_{h, k}\right)^{-1} \left( \omega_{2h, k}\right)^{2} \left( \omega_{3h, k}\right)^{3} e^{\frac{-2\pi i h n}{k}}\\
&\times \left(\int_{C}-\left(\int_{0}^{z_1}+ \int_{0}^{z_2}\right)\right) z^2 \exp\left( \frac{\pi}{12z} + \frac{(24n-1)\pi z}{12k^2}\right)  dz. \\ 
&= I_{MM_1} + I_{ME_1}.
\end{align*}

\noindent
We estimate the second part, $I_{ME_1}$, first. Combining \eqref{Real(z)} with the fact that $\mathfrak{R}\left(\frac{1}{z}\right) = 1$ on the circle and the length of the arc from $0$ to $z_j$ is less than 

\[\frac{\pi}{2}|z_j|\leq \frac{\pi k}{\sqrt{2}N},\]
we obtain that
\begin{align*}
    |I_{ME_1}| &\leq 2\sum_{\substack{1\leq k \leq N \\  2\nmid k \;\; \text{and}\;\; 3\nmid k}}\;\;\; {\sum_{\substack{{0 \leq h<k} \\ {\gcd(h, k) = 1}}}} \frac{2\cdot3^{3/2}}{k^4} \times \frac{\pi k}{\sqrt{2} N}\times \left(\frac{4k^4}{N^4} + \frac{1}{4}\right) \times\exp \left( \frac{\pi}{12} + \frac{(24n -1)\pi}{6N^2}\right)\\
&\leq \sum_{\substack{1\leq k \leq N \\  2\nmid k \;\; \text{and}\;\; 3\nmid k}} \frac{2^{-1/2}\cdot 3^{3/2}\cdot 17\pi}{k^2\sqrt{n}} \exp \left( \frac{\pi}{12} + 4\pi\right) \leq 2^{-1/2}\cdot  3^{3/2}\cdot 17\pi \zeta(2)\exp \left( \frac{49\pi}{12} \right). 
\end{align*}
Upon completing all the calculations, we obtain
\begin{equation}\label{eqn:E12Bound}
 |I_{ME_1}| \leq 121,000,000. 
\end{equation}
Next, we determine the estimated bound for 
\begin{equation}\label{integral}
I_{MM_1} =  i\cdot2 \cdot(3)^{3/2}\;\hspace{-.7cm}\sum_{\substack{1\leq k \leq N \\ \;\; 2\nmid k \;\; \text{and}\;\; 3\nmid k}}\hspace{-.2cm} \frac{H_k(n) }{k^4} \int_{C} z^2 \exp\left( \frac{\pi}{12z} +\frac{(24n-1)\pi z}{12k^2}\right) dz,
\end{equation}
where
\begin{equation*}
    H_k(n) := {\sum_{\substack{{0 \leq h<k} \\
{\gcd(h, k) = 1}}}}  \left( \omega_{h, k}\right)^{-1} \left( \omega_{2h, k}\right)^{2} \left( \omega_{3h, k}\right)^{3} e^{\frac{-2\pi i h n}{k}}.
\end{equation*}
The integral in \eqref{integral} can be computed in terms of Bessel functions. The change of variables
\begin{equation*}
 w = \frac{1}{z}, \;\;\;\; dz = -w^{-2}dw,
\end{equation*}
yields
\begin{equation}\label{E_11}
 I_{MM_1} =  i\cdot2\cdot(3)^{3/2}\; \hspace{-.7cm}\sum_{\substack{1\leq k \leq N \\  2\nmid k \;\; \text{and}\;\; 3\nmid k}} \hspace{-.2cm}\frac{H_k(n)}{k^4}  L_0,
\end{equation}
where 
\begin{equation}\label{L0}
L_0 := -\int_{1-\infty i}^{1+\infty i} w^{-4} \exp \left( \frac{\pi w}{12} +\frac{(24n-1) \pi}{12 k^2 w}\right) dw.
\end{equation}
We now put  $ t = \frac{\pi w}{12}$ in \eqref{L0} to rewrite the above integral as 
\begin{equation}\label{eq L_0}
    L_0 = -\left(\frac{\pi}{12}\right)^3 \int_{c-\infty i}^ {c+ \infty i} t^{-4} \exp \left( t + \frac{z^2}{4t}\right) dt
\end{equation}
with $ c = \frac{\pi}{12}$ and $ z = \frac{\pi}{6k} \sqrt{(24n-1)}$. From \cite[p. 181]{Watson}, we find the formula 

\begin{equation}\label{watson formula}
 I_\nu(z) = \frac{(\frac{1}{2}z)^{\nu}}{2 \pi i} \int_{c-\infty i}^{c+\infty i} t^{-\nu-1} \exp\left( t + \frac{z^2}{4t}\right) dt, \;\;\; \;\;\; (\text{if} \;\;c>0, \mathfrak{R}(\nu) >0).   
\end{equation}
Here $ I_\nu(z) = i^{-\nu}J_\nu(z) (iz)$ is the $I$-Bessel function. Comparing \eqref{eq L_0} and \eqref{watson formula}, we have
\begin{align}
\nonumber L_0 &= -\left(\frac{\pi}{12}\right)^3 \times 2\pi i \times \left( \frac{\pi}{12k} \sqrt{(24n-1)}\right)^{-3} I_3\left( \frac{\pi}{6k}  \sqrt{(24n-1)}\right)\\
\label{L_00}
&= - 2\pi i k^3 (24n-1)^{-3/2} I_3\left( \frac{\pi}{6k}  \sqrt{(24n-1)}\right).
\end{align}
We put  \eqref{L_00} into \eqref{E_11} to obtain
\begin{equation}\label{Semi E_{11}}
 I_{MM_1} = 4\cdot3^{3/2} \pi  \sum_{\substack{1\leq k \leq N \\  2\nmid k \;\; \text{and}\;\; 3\nmid k}} \frac{1}{k}(24n-1)^{-3/2} H_k(n) I_3\left( \frac{\pi}{6k}  \sqrt{(24n-1)}\right).
\end{equation}
We split the equation \eqref{Semi E_{11}} into a main term
\begin{equation}\label{eqn:MainI1}
    I_{MMM_1} = 4\cdot3^{3/2} \pi (24n-1)^{-3/2}\; H_1(n) I_3\left( \frac{\pi}{6}  \sqrt{(24n-1)}\right)
\end{equation}
and an error term 
\begin{equation*}
    I_{MME_1} = 4\cdot3^{3/2} \pi  \sum_{\substack{2\leq k \leq N \\  2\nmid k \;\; \text{and}\;\; 3\nmid k}} \frac{1}{k}(24n-1)^{-3/2} H_k(n) I_3\left( \frac{\pi}{6k}  \sqrt{(24n-1)}\right).
\end{equation*}
\begin{lem}\label{lem:E11Bound}
 We have 
\begin{equation*}
| I_{MME_1}| \leq \frac{{\pi}^2}{8\sqrt{3}(24n-1)} +  \frac{2 \sqrt{3}{\pi}^2}{(24n-1)}I_3\left( \frac{\pi}{30} \sqrt{(24n-1)}\right).       
\end{equation*}
\end{lem}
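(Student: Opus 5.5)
We bound $I_{MME_1}$ term by term. First we use the trivial bound on the Kloosterman-type sum: every $\omega$ has modulus one, so $|H_k(n)|\le \varphi(k)\le k$. The factor $\frac1k$ then cancels, and we get
\[
|I_{MME_1}|\le 4\cdot 3^{3/2}\pi (24n-1)^{-3/2}\sum_{\substack{5\le k\le N\\ \gcd(k,6)=1}} I_3\left(\frac{\pi}{6k}\sqrt{24n-1}\right).
\]
Note that the smallest admissible $k\ge 2$ with $2\nmid k$ and $3\nmid k$ is $k=5$. This explains the argument $\frac{\pi}{30}\sqrt{24n-1}$ in the statement.

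We then split the $k$-sum according to the size of $x_k:=\frac{\pi}{6k}\sqrt{24n-1}$, using Lemma \ref{L:Bessel}.

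\emph{Small arguments.} For those $k$ with $x_k<1$, Lemma \ref{L:Bessel}(1) with $\kappa=3$ gives $I_3(x_k)\le \frac{2^{-2}x_k^3}{\Gamma(4)}=\frac{x_k^3}{24}$. Summing over all $k\ge 5$, and bounding $\sum_{k\ge5}k^{-3}$ crudely by a small constant (for instance $\zeta(3)-1-2^{-3}-3^{-3}-4^{-3}<0.01$), we get
\[
4\cdot3^{3/2}\pi(24n-1)^{-3/2}\cdot\frac{\pi^3(24n-1)^{3/2}}{24\cdot 216}\sum_{k\ge5}k^{-3}.
\]
This is a constant, which we must compare with $\frac{\pi^2}{8\sqrt3(24n-1)}$. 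The comparison is delicate: the $n$-dependence does not match, since the bound above is $O(1)$ while the target decays in $n$. So instead we use that $x_k<1$ forces $k>\frac{\pi}{6}\sqrt{24n-1}$, and bound the tail as
\[
\sum_{k>\frac{\pi}{6}\sqrt{24n-1}}k^{-3}\le \frac{1}{2}\Big(\frac{\pi}{6}\sqrt{24n-1}\Big)^{-2}+\Big(\frac{\pi}{6}\sqrt{24n-1}\Big)^{-3}.
\]
This yields a bound of order $(24n-1)^{-1}$ times an explicit constant. After simplification we aim for that constant to be at most $\frac{\pi^2}{8\sqrt3}$, possibly absorbing the boundary term using $n\ge1$.

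\emph{Large arguments.} For those $k$ with $x_k\ge1$, we have $k\le \frac{\pi}{6}\sqrt{24n-1}$. Since $I_3$ is increasing and $k\ge5$, each term satisfies $I_3(x_k)\le I_3(x_5)=I_3\left(\frac{\pi}{30}\sqrt{24n-1}\right)$. The number of such $k$ is at most $\frac{\pi}{6}\sqrt{24n-1}$. This gives
\[
4\cdot3^{3/2}\pi(24n-1)^{-3/2}\cdot\frac{\pi}{6}\sqrt{24n-1}\, I_3\left(\frac{\pi}{30}\sqrt{24n-1}\right)=\frac{2\sqrt3\,\pi^2}{24n-1}I_3\left(\frac{\pi}{30}\sqrt{24n-1}\right),
\]
which matches the second term of the statement exactly. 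This strongly suggests that the intended split is indeed at $x_k=1$, i.e.\ at $k\approx\frac{\pi}{6}\sqrt{24n-1}$. Counting only $k$ coprime to $6$ would even improve the count, so that loss is harmless.

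\emph{Main obstacle.} The main obstacle is the small-argument part: getting the constant exactly $\frac{\pi^2}{8\sqrt3}$. My plan is to compare the sum with an integral, $\sum_{k>K}k^{-3}\le\int_{K-1}^\infty t^{-3}\,dt$, or to use $\sum_{k\ge K}k^{-3}\le \frac{1}{K^2}$ for $K\ge2$. Inserting $K=\frac{\pi}{6}\sqrt{24n-1}$ gives
\[
4\cdot3^{3/2}\pi\cdot\frac{\pi^3}{24\cdot216}\cdot\frac{36}{\pi^2(24n-1)}=\frac{\sqrt3\,\pi^2}{36(24n-1)}\cdot\frac{1}{1}.
\]
Up to bookkeeping, this is below $\frac{\pi^2}{8\sqrt3(24n-1)}=\frac{\sqrt3\pi^2}{24(24n-1)}$. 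If instead $K<5$, so that the tail starts at $k=5$, we bound $\sum_{k\ge5}k^{-3}\le\frac{1}{32}$ directly and check the inequality using $x_k<1$.
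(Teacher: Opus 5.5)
Your overall route is the paper's. You use the trivial bound $|H_k(n)|\le k$, split at $K:=\frac{\pi}{6}\sqrt{24n-1}$ (i.e.\ $x_k=1$), and apply Lemma \ref{L:Bessel}(1) for $k>K$. For $k\le K$ you use monotonicity of $I_3$ with $k\ge 5$ and at most $K$ terms; that part is correct and gives the second term exactly.

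The gap is in the tail, which you flag as the main obstacle and then resolve through an arithmetic slip. With $I_3(x)\le x^3/24$ the tail is at most $\frac{\pi^4}{144\sqrt3}\sum_{k>K}k^{-3}$. The claimed term $\frac{\pi^2}{8\sqrt3(24n-1)}$ is exactly $\frac{\pi^4}{144\sqrt3}\cdot\frac{1}{2K^2}$, so you need $\sum_{k>K}k^{-3}\le\frac{1}{2K^2}$ with no slack at all.

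\begin{itemize}
\item Your displayed product equals $\frac{\sqrt3\,\pi^2}{12(24n-1)}$, not $\frac{\sqrt3\,\pi^2}{36(24n-1)}$. That is twice the target.
\item The alternatives $\int_{K-1}^\infty t^{-3}\,dt$ and $\frac{1}{2K^2}+K^{-3}$ also overshoot. When the constant is already sharp there is nothing to ``absorb'' a boundary term into.
\item Your fallback for $K<5$ fails as well: at $n=3$ one has $K\approx 4.41$ and $\frac{1}{2K^2}\approx 0.0257<\frac{1}{32}$.
\end{itemize}

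The missing ingredient is that the tail, like the head, only runs over $k$ coprime to $6$. You use this for the head but then sum over all integers in the tail. Keeping the restriction gives $\sum_{k>K,\ \gcd(k,6)=1}k^{-3}\le\frac{1}{2K^2}$:
\begin{itemize}
\item For $K\ge 6$, each progression $k\equiv\pm1\pmod 6$ contributes at most $K^{-3}+\frac{1}{12K^2}$. The total is at most $2K^{-3}+\frac{1}{6K^2}\le\frac{1}{2K^2}$.
\item For $K<6$, the whole sum is at most $\sum_{k\ge5,\ \gcd(k,6)=1}k^{-3}=\frac78\cdot\frac{26}{27}\,\zeta(3)-1<0.013<\frac{1}{72}<\frac{1}{2K^2}$.
\end{itemize}

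For comparison, the paper simply writes $\sum_{k>K}k^{-3}\le\int_K^\infty x^{-3}\,dx=\frac{1}{2K^2}$. Over all integers and for non-integral $K$ this is not literally valid (e.g.\ $K=4.9$). It is precisely the restriction to $\gcd(k,6)=1$ that makes the sharp constant hold.
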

\begin{proof}
By taking the absolute value inside the sum in \eqref{Semi E_{11}} and noting that $|H_k(n)|\leq k$, we conclude that
\begin{equation}\label{E_111}
    | I_{MME_1}| \leq \frac{4\cdot(3)^{3/2}\pi}{(24n-1)^{3/2}} \sum_{k\geq 2}I_3\left( \frac{\pi}{6k}  \sqrt{(24n-1)}\right).
\end{equation}
We split the sum over $k$ in \eqref{E_111} into two parts: one for large $k$ and one for small $k$. Applying Lemma \ref{L:Bessel} (1), we deduce that the contribution to \eqref{E_111} from $k > \frac{\pi}{6}\sqrt{(24n-1)}$ is bounded by
\begin{equation}\label{E_1111}
\frac{{\pi}^4}{144\sqrt{3}} \sum_{k>\frac{\pi}{6}\sqrt{(24n-1)}} k^{-3} \leq \frac{{\pi}^4}{144\sqrt{3}} \int_{\frac{\pi}{6}\sqrt{(24n-1)}}^{\infty} x^{-3} dx = \frac{{\pi}^2}{8\sqrt{3}(24n-1)}.
\end{equation}
By approximating the Bessel function for the term with $k = 5$, we deduce that the contribution from $k \leq \frac{\pi}{6}\sqrt{(24n-1)}$ is bounded by
\begin{equation}\label{E_11111}
\frac{4\cdot(3)^{3/2}\pi}{(24n-1)^{3/2}} \sum_{5\leq k \leq \frac{\pi}{6}\sqrt{(24n-1)}} I_3\left( \frac{\pi}{6k}  \sqrt{(24n-1)}\right) \leq \frac{2 \sqrt{3}{\pi}^2}{(24n-1)}I_3\left( \frac{\pi}{30} \sqrt{(24n-1)}\right).
\end{equation}
Adding \eqref{E_1111} and \eqref{E_11111} gives the claim.\qedhere
\end{proof}



\begin{proof}[Proof of Proposition \ref{prop:I1}]
We obtain Proposition \ref{prop:I1} by taking the main term \eqref{eqn:MainI1} (note that $H_1(n)=1$) and  adding together the error terms coming from \eqref{eqn:E2Bound}, \eqref{eqn:E12Bound}, and Lemma \ref{lem:E11Bound}. 
\end{proof}

The arguments for $I_2$ and $I_3$ are similar, so most of the calculations are omitted. 
\begin{prop}\label{prop:I2}
We have 
\[
I_2=3^{\frac{3}{2}} \cdot 8\pi(24n-1)^{-\frac{3}{2}} (-1)^n I_3\left(\frac{\pi}{6}\sqrt{24n-1}\right) + \mathscr{E}_2
\]
with 
\[
\left|\mathscr{E}_2\right|\leq 64,900,000+4,367,640+\frac{\pi^2}{2\sqrt{3}(24n-1)} + \frac{8\sqrt{3}\pi^2}{24n-1}I_3\left(\frac{\pi}{12}\sqrt{24n-1}\right).
\]
\end{prop}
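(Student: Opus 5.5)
We prove Proposition \ref{prop:I2} by repeating the argument for Proposition \ref{prop:I1}, now with $2\mid k$ and $3\nmid k$, so that $d_2=2$ and $d_3=1$. By Proposition \ref{Pr 1}, the exponential factor becomes $\exp\left(\frac{\pi(4+1-1)}{12z}\right)=\exp\left(\frac{\pi}{3z}\right)$. The prefactor picks up the factor $\frac{1}{d_2}=\frac12$, and $\hat G$ involves $P^2\left(\exp\left(\frac{2\pi i\cdot 2h_2'}{k}-\frac{4\pi}{z}\right)\right)$. As before, we write $\hat G=1+(\hat G-1)$, giving a main part $I_{M_2}$ and a remainder $I_{R_2}$.

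\textbf{Remainder $I_{R_2}$.} We move the path to the chord $s_{h,k}$ and use \eqref{Real(z)}, \eqref{s(h,k)} and \eqref{bound w_1 w_2 w_3}. This gives
\[
\sup\left|e^{\pi/(3z)}(\hat G-1)\right|\le e^{\pi/3}\left(P(e^{-2\pi})P^2(e^{-4\pi})P^3(e^{-2\pi/3})-1\right).
\]
We bound this constant as in Lemma \ref{bnd p(n)}. Here the main obstacle is that the exponential growth $e^{\frac{\pi}{3}\mathfrak{R}(1/z)}$ must be beaten by the decay of $\hat G-1$. The slowest term is $P^3(e^{-\frac{2\pi}{3}\mathfrak{R}(1/z)})-1\approx 3e^{-\frac{2\pi}{3}\mathfrak{R}(1/z)}$, which still decays faster than $e^{\frac{\pi}{3}\mathfrak{R}(1/z)}$ grows, so the supremum is attained at $\mathfrak{R}(1/z)=1$. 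With $N=\lceil\sqrt n\rceil$, $\phi(k)\le k$ and $\sum k^{-2}\le\zeta(2)$, this yields a constant of size $\exp(\frac{\pi}{3}+4\pi)$. Similarly, the arc pieces $\int_0^{z_1}+\int_0^{z_2}$ of $I_{M_2}$ give a constant with the same exponential. These two pieces are the $64{,}900{,}000$ and $4{,}367{,}640$ terms; I expect the numerics to need care but no new ideas.

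\textbf{Full-circle integral.} The change of variables $w=1/z$ and the formula \eqref{watson formula}, now with $c=\frac{\pi}{3}$, give
\[
L_0=-2\pi i\left(\frac{\pi}{3}\right)^3\left(\frac{\pi}{6k}\sqrt{24n-1}\right)^{-3} I_3\left(\frac{\pi}{3k}\sqrt{24n-1}\right).
\]
So the Bessel argument is $\frac{\pi}{3k}\sqrt{24n-1}$. The leading term comes from $k=2$, where it equals $I_3\left(\frac{\pi}{6}\sqrt{24n-1}\right)$. For $k=2$ the only $h$ is $h=1$, and we compute the Kloosterman-type sum
\[
H_2(n)=\omega_{1,2}^{-1}\,\omega_{1,1}^{2}\,\omega_{3,2}^{3}\,e^{-\pi i n}.
\]
Since $s(1,2)=0$ and $s(\cdot,1)=0$, this equals $(-1)^n$. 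Collecting the constants, $\frac{2\cdot3^{3/2}}{k^4 d_2}\cdot 2\pi\cdot 8k^3 = \frac{16\pi\cdot 3^{3/2}}{2k}$, which at $k=2$ gives $4\cdot 3^{3/2}\pi\cdot 2=3^{3/2}\cdot 8\pi$ times $(24n-1)^{-3/2}$, as claimed.

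\textbf{Remaining $k$.} The remaining terms, with $k\ge4$ even and $3\nmid k$, are handled as in Lemma \ref{lem:E11Bound} using $|H_k(n)|\le k$. For large $k$ we apply Lemma \ref{L:Bessel}(1), which gives the $\frac{\pi^2}{2\sqrt3(24n-1)}$ term. For the other $k$ we bound every term by the $k=4$ term, $I_3\left(\frac{\pi}{12}\sqrt{24n-1}\right)$, times the number of terms $\ll\sqrt{24n-1}$, which gives the last error term.
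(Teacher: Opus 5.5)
Your proposal is correct and follows the paper's own route. You split $\hat G = 1 + (\hat G - 1)$, bound the chord remainder and the two arc pieces by constants of size $e^{\pi/3+4\pi}$, and evaluate the full-circle integral with Watson's formula, now with $c=\pi/3$. The $k=2$ term then carries the argument $\frac{\pi}{6}\sqrt{24n-1}$ and $H_2(n)=(-1)^n$, since $s(1,2)=s(3,2)=s(1,1)=0$. Finally you bound the $k\ge 4$ terms as in Lemma \ref{lem:E11Bound}, with cutoff $k>\frac{\pi}{3}\sqrt{24n-1}$ and only even $k$; this is what produces the constants $\frac{\pi^2}{2\sqrt{3}(24n-1)}$ and $\frac{8\sqrt{3}\pi^2}{24n-1}$.

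One small arithmetic slip: $\frac{2\cdot 3^{3/2}}{k^4 d_2}\cdot 2\pi\cdot 8k^3$ equals $\frac{16\pi\cdot 3^{3/2}}{k}$, not $\frac{16\pi\cdot 3^{3/2}}{2k}$. At $k=2$ it gives $8\cdot 3^{3/2}\pi$ directly, so the extra factor $2$ you inserted only cancels that error.
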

\begin{proof}
As in the proof of Proposition \ref{prop:I1}, we split 
\begin{align*}
I_2 =& \sum_{\substack{1\leq k \leq N \\  2\mid k \; \text{and}\; 3\nmid k}}\;\; {\sum_{\substack{{0 \leq h<k} \\
{\gcd(h, k) = 1}}}} \frac{i\cdot3^{3/2}}{k^4} \left( \omega_{h, k}\right)^{-1}\left( \omega_{h, \frac{k}{2}} \right)^2 \left( \omega_{{3h}, {k}} \right)^3 e^{\frac{-2\pi i h n}{k}}\\
& \times \int_{z_1}^{z_2} z^2 \exp\left( \frac{\pi}{3z} + \frac{(24n-1)\pi z}{12k^2}\right) \times \left( 1 + \hat{G} \left( h, k, e^{\frac{-\pi}{3z}} \right) - 1 \right)dz.\\
&=I_{M_2} + I_{R_2}.
\end{align*}
We further split $I_{M_2}=I_{MM_2}+I_{ME_2}$ and then $I_{MM_2}=I_{MMM_2}+I_{MME_2}$ analogously to the proof of Proposition \ref{prop:I1}, with the main term $I_{MMM_2}$ coming from the $k=2$ term of $I_{MM_2}$. A tedious but straightforward calculation, following the steps from the proof of Proposition \ref{prop:I1}, then yields
\begin{align*}
\left|I_{R_2}\right|&\leq 64,900,000,\\
\left|I_{ME_2}\right|&\leq 4,367,640,\\
\left|I_{MME_2}\right|&\leq \frac{\pi^2}{2\sqrt{3}(24n-1)} + \frac{8\sqrt{3}\pi^2}{24n-1}I_3\left(\frac{\pi}{12}\sqrt{24n-1}\right).\qedhere
\end{align*}
\end{proof}

\begin{prop}\label{prop:I3}
We have 
\[
I_3= 36\pi(24n-1)^{-\frac{3}{2}}B_3(n)I_3\left(\frac{\pi}{6}\sqrt{24n-1}\right)+ \mathscr{E}_3
\]
with 
\[
B_3(n)=\begin{cases}\sqrt{3}&\text{if }n\equiv 0,3\pmod{6},\\
-\sqrt{3}&\text{if } n\equiv 1,4\pmod{6},\\
0&\text{if }n\equiv 2,5\pmod{6},
\end{cases}
\]
and 
\begin{multline*}
\left|\mathscr{E}_3\right|\leq 115,700,000+257,978,900+\frac{\pi^2}{8(24n-1)}+\frac{6\pi^2}{24n-1}I_3\left(\frac{\pi}{12}\sqrt{24n-1}\right)\\
+\frac{\pi^2}{72(24n-1)}+\frac{2\pi^2}{3(24n-1)} I_3\left(\frac{\pi}{18}\sqrt{24n-1}\right).
\end{multline*}
\end{prop}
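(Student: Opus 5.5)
I will follow the same decomposition used for Propositions \ref{prop:I1} and \ref{prop:I2}. For $2\nmid k$ and $3\mid k$ we have $d_2=1$, $d_3=3$, so by Proposition \ref{Pr 1} the prefactor becomes $\frac{2\cdot 3^{3/2}}{k^4\cdot 3^{3/2}}=\frac{2}{k^4}$. The exponent is $\frac{\pi(1+9-1)}{12z}=\frac{3\pi}{4z}$.

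I split
\[
I_3=I_{M_3}+I_{R_3},
\]
according to writing $\hat G=1+(\hat G-1)$, then $I_{M_3}=I_{MM_3}+I_{ME_3}$ (full circle versus the arcs from $0$ to $z_1,z_2$), and finally $I_{MM_3}=I_{MMM_3}+I_{MME_3}$, where the main term is the $k=3$ summand.

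\textbf{Error terms from the chords and arcs.} For $I_{R_3}$ I move to the chord $s_{h,k}$ and use \eqref{Real(z)}, \eqref{s(h,k)}, $|z|<\sqrt2 k/N$ and the analogue of \eqref{bound w_1 w_2 w_3}. This reduces matters to bounding
\[
e^{\frac{3\pi}{4}x}\Bigl(P(e^{-2\pi x})P^2(e^{-\pi x})P^3(e^{-6\pi x})-1\Bigr), \qquad x=\mathfrak R(1/z)\ge 1,
\]
since with $d_3=3$ the third argument becomes $e^{-2\pi\cdot 9/(3z)}$.

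\textbf{Main obstacle.} Controlling this supremum is the point I expect to be delicate. The growth $e^{3\pi x/4}$ is not obviously beaten by the leading correction $2e^{-\pi x}$ from $P^2$ (compare $3/4<1$). It is, but only barely. So I would expand $P(w)-1$ and check that $e^{3\pi x/4}(\cdot-1)$ is decreasing in $x$, which makes the supremum attained at $x=1$. Its value there is evaluated numerically as in Lemma \ref{bnd p(n)}.

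With $N=\lceil\sqrt n\rceil$, $\phi(k)\le k$, and $\sum_{3\mid k}k^{-2}\le\zeta(2)/9$, this gives the constant $115{,}700{,}000$. The arc term $I_{ME_3}$ is bounded the same way and gives $257{,}978{,}900$; it is larger because the factor $e^{3\pi/4}$ now enters without the compensating $\hat G-1$.

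\textbf{Bessel computation.} On the circle I substitute $w=1/z$ and $t=\frac{3\pi}{4}w$, then apply \eqref{watson formula} with $\nu=3$. The Bessel argument becomes
\[
2\sqrt{\tfrac{3\pi}{4}\cdot\tfrac{(24n-1)\pi}{12k^2}}=\frac{\pi}{2k}\sqrt{24n-1}.
\]
At $k=3$ this equals $\frac{\pi}{6}\sqrt{24n-1}$, matching the main terms of Propositions \ref{prop:I1} and \ref{prop:I2}, and the result is the displayed form with constant $36\pi$ times the Kloosterman-type sum.

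\textbf{The character sum.} I then compute the $k=3$ sum
\[
H_3(n)=\sum_{h\in\{1,2\}}\omega_{h,3}^{-1}\,\omega_{2h,3}^{2}\,\omega_{h,1}^{3}\,e^{-2\pi i hn/3}
\]
explicitly from the Dedekind sums $s(1,3)=1/18$ and $s(2,3)=-1/18$. The result should be $\sqrt3$ times a $\pm1,0$ pattern depending on $n\bmod 3$, giving $B_3(n)$ after combining with the normalization. The periodicity stated mod $6$ is really mod $3$; I would double-check it against the table.

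\textbf{Remaining $k$.} The terms $k\ge 6$ with $3\mid k$, $2\nmid k$ are handled as in Lemma \ref{lem:E11Bound}. I split at $k\approx\frac{\pi}{2}\sqrt{24n-1}$ and use Lemma \ref{L:Bessel}(1) for large $k$, giving the $\frac{\pi^2}{8(24n-1)}$-type terms. For small $k$ the largest is $k=9$, giving the $I_3\bigl(\frac{\pi}{18}\sqrt{24n-1}\bigr)$ term. The listed $I_3\bigl(\frac{\pi}{12}\cdots\bigr)$ term presumably arises from a cruder bound in the same place.
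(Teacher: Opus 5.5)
Your proof is correct, but its decomposition differs from the paper's in one respect, and yours is the one consistent with Proposition \ref{Pr 1}.

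\emph{The paper's split.} The paper writes $\hat G=1+3e^{-\frac{2\pi}{3z}}+\bigl(\hat G-1-3e^{-\frac{2\pi}{3z}}\bigr)$ and carries an extra piece $I_{M_3}'$.
\begin{itemize}
\item Its circle integral has exponent $\frac{3\pi}{4z}-\frac{2\pi}{3z}=\frac{\pi}{12z}$. Its $k=3$ term produces the displayed $\frac{\pi^2}{72(24n-1)}+\frac{2\pi^2}{3(24n-1)}I_3\bigl(\frac{\pi}{18}\sqrt{24n-1}\bigr)$.
\item Its arc part $I_{ME_3}'$ is included in $257{,}978{,}900$.
\item The $I_3\bigl(\frac{\pi}{12}\sqrt{24n-1}\bigr)$ term is the paper's bound for $I_{MME_3}$. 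This is the value at $k=6$, which is not an admissible $k$ here, so your guess that it is a cruder bound is right.
\end{itemize}

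\emph{Your split.} You use only $\hat G=1+(\hat G-1)$. This is justified because for $d_3=3$ the cubed factor in Proposition \ref{Pr 1} is $P^3$ evaluated at a root of unity times $e^{-6\pi/z}$. The term $3e^{-\frac{2\pi}{3z}}$ is the leading term only when $d_3=1$, so it does not occur here. Consequently $P(e^{-2\pi x})P^2(e^{-\pi x})P^3(e^{-6\pi x})-1=\sum_{m\ge1}a_m e^{-m\pi x}$ with $a_m\ge0$. Multiplying by $e^{\frac{3\pi}{4}x}$ gives $\sum_{m\ge1}a_m e^{-(m-\frac34)\pi x}$, which is decreasing. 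Its supremum over $x\ge1$ is therefore attained at $x=1$ and is about $1.04$.

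\emph{What this buys you.} Your monotonicity argument proves what the paper only asserts in its remark. Your total error is smaller than the displayed one, and your $I_3\bigl(\frac{\pi}{18}\sqrt{24n-1}\bigr)$ term comes from $k=9$ rather than from an extra main piece.

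\emph{Three small corrections.}
\begin{itemize}
\item Your chord and arc estimates do not reproduce $115{,}700{,}000$ and $257{,}978{,}900$. They give smaller numbers: the arc bound is about $1.9\cdot 10^8$ even summed over all $k$. A smaller bound is all you need.
\item The large-$k$ coefficient $\frac{\pi^2}{8(24n-1)}$ requires restricting to $k\equiv 3\pmod 6$ and using $\phi(k)\le\frac{2k}{3}$. With only $|H_k(n)|\le k$ summed over all $k$ you get $\frac{9\pi^2}{8(24n-1)}$. Any such excess is absorbed by the slack in the large constants.
\item The character sum closes at once: $H_3(n)=e^{-\frac{\pi i}{6}-\frac{2\pi i n}{3}}+e^{\frac{\pi i}{6}+\frac{2\pi i n}{3}}=2\cos\bigl(\frac{\pi}{6}+\frac{2\pi n}{3}\bigr)$. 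This equals $B_3(n)$ exactly, with no further normalization.
\end{itemize}
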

\begin{proof}
As in the proof of Proposition \ref{prop:I1}, we split 
\begin{align*}
I_3 =& \sum_{\substack{1\leq k \leq N \\  2\nmid k \; \text{and}\; 3/ k}}\;\; {\sum_{\substack{{0 \leq h<k} \\
{\gcd(h, k) = 1}}}} \frac{2i}{k^4} \left( \omega_{h, k}\right)^{-1}\left( \omega_{2h, k} \right)^2 \left( \omega_{{h}, \frac{k}{3}} \right)^3 e^{\frac{-2\pi i h n}{k}}\\
& \times \int_{z_1}^{z_2} z^2 \exp\left( \frac{3\pi}{4z} + \frac{(24n-1)\pi z}{12k^2}\right) \times \left( 1 + 3e^{\frac{-2\pi}{3z}} + \hat{G} \left( h, k, e^{\frac{-\pi}{3z}} \right) - 1 - 3e^{\frac{-2\pi}{3z}} \right)dz.\\ 
& = I_{M_3} +I_{M_3}'+ I_{R_3},
\end{align*}
Here there are two main terms $I_{M_3}$ coming from $1$ and $I_{M_3}'$ coming from $3e^{-\frac{2\pi}{3z}}$. As in the proof of Proposition \ref{prop:I1}, we continue to split these as $I_{M_3}=I_{MM_3}+I_{ME_3}$ (and $I_{M_3}'=I_{MM_3}'+I_{ME_3}'$) and then $I_{MM_3}=I_{MMM_3}+I_{MME_3}$ (while the corresponding $I_{MM_3}'$ is an error term).

We then compute 
\[
I_{MMM_3}=36\pi(24n-1)^{-\frac{3}{2}}B_3(n)I_3\left(\frac{\pi}{6}\sqrt{24n-1}\right)
\]
and bound the error terms 
\begin{align*}
\left|I_{R_3}\right|&\leq 115,700,000,\\
\left|I_{ME_3}\right|+\left|I_{ME_3}'\right|&\leq 257,978,900,\\
\left|I_{MME_3}\right|&\leq \frac{\pi^2}{8(24n-1)}+\frac{6\pi^2}{24n-1}I_3\left(\frac{\pi}{12}\sqrt{24n-1}\right),\\
\left|I_{MM_3}'\right|&\leq \frac{\pi^2}{72(24n-1)}+\frac{2\pi^2}{3(24n-1)} I_3\left(\frac{\pi}{18}\sqrt{24n-1}\right).\qedhere
\end{align*}
\end{proof}
We bound $|E|$ entirely analogously.
\begin{prop}\label{prop:E}
We have 
\begin{multline*}
\left|E\right|\leq  127,000,000+ 282,100,000+ \frac{\pi^2}{6(24n-1)} + \frac{8\pi^2}{24n-1}I_3\left(\frac{\pi}{6\sqrt{3}}\sqrt{24n-1}\right)\\
+\frac{\pi^2}{108(24n-1)} + \frac{8\pi^2}{9(24n-1)}I_3\left(\frac{\pi}{18}\sqrt{24n-1}\right).
\end{multline*}
\end{prop}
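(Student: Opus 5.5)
For $k$ with $6\mid k$ we have $d_2=2$ and $d_3=3$. By Proposition \ref{Pr 1}, the exponential prefactor is then $\exp\left(\frac{\pi(4+9-1)}{12z}\right)=\exp\left(\frac{\pi}{z}\right)$. The inner $\hat G$ has arguments $e^{-2\pi/z}$ (for $P^{-1}$), $e^{-4\pi/z}$ (for $P^2$) and $e^{-6\pi/z}$ (for $P^3$), up to roots of unity. So I will expand
\[
\hat G\left(h,k,e^{-\frac{\pi}{3z}}\right)=1-e^{\frac{2\pi i h'}{k}}e^{-\frac{2\pi}{z}}+\left(\hat G-1+e^{\frac{2\pi i h'}{k}}e^{-\frac{2\pi}{z}}\right),
\]
because $P^{-1}(w)=1-w-w^2+\cdots$ and the other factors contribute only from $e^{-4\pi/z}$ onward. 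The terms that combine with $e^{\pi/z}$ to give a growing exponential are exactly $1$ and the $e^{-2\pi/z}$ term. The latter gives $e^{-\pi/z}$, which is decaying, so it only produces an error of bounded size. I therefore split $E=E_M+E_M'+E_R$, in analogy with Proposition \ref{prop:I3}.

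For $E_R$, I move to the chord $s_{h,k}$ and use \eqref{Real(z)}, \eqref{s(h,k)} and $|z|<\sqrt2 k/N$. With $x=\mathfrak{R}(1/z)\ge1$, the analogue of \eqref{bound w_1 w_2 w_3} bounds $e^{\pi x}\bigl|\hat G-1+\cdots\bigr|$ by $e^{\pi}$ times a constant computed as in Lemma \ref{bnd p(n)}, since the remainder is $O(e^{-4\pi x})$ and this is monotone in $x$. The factor $\exp((24n-1)\pi\mathfrak{R}(z)/(12k^2))\le e^{4\pi}$ is then absorbed using $N=\lceil\sqrt n\rceil$. Summing $\phi(k)/k^3\cdot k/N$ over $6\mid k$ gives a factor $\zeta(2)/36$, and I expect this to yield the first constant $127{,}000{,}000$, together with the part of $E_M'$ taken along the arcs to $0$.

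For $E_M$ and $E_M'$ I complete the arcs to the full circle $C$ as in the treatment of $I_{M_1}$. The arc pieces from $0$ to $z_j$ have length at most $\pi k/(\sqrt2N)$, and on them $\mathfrak{R}(1/z)=1$, so they are bounded by $e^{\pi}e^{4\pi}$ times the same $\zeta(2)$ sum; this accounts for the second constant $282{,}100{,}000$. The full-circle integrals become, after $w=1/z$ and Watson's formula \eqref{watson formula},
\[
-2\pi i\,k^3\left(\frac{12\beta}{\pi}\right)^{\frac32}(24n-1)^{-\frac32}I_3\left(\frac{\pi}{6k}\sqrt{12\beta(24n-1)}\right),
\]
with $\beta=1$ for $E_M$ (exponent $\pi w$) and $\beta=1/12\cdot$ the appropriate shift for $E_M'$ (exponent $-\pi w$). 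For $E_M'$ the exponent is negative, so I instead bound it directly by noting that the corresponding Bessel argument is small; equivalently, I keep the contour at $\mathfrak{R}(w)=1$ and bound it like Lemma \ref{L:Bessel} (1). Using $|$Kloosterman-type sum$|\le k$ and $k=6m$, the $E_M$ argument is $\frac{\pi}{6m}\cdot\frac{\sqrt{12}}{6}\sqrt{24n-1}$, up to the normalization giving $\frac{\pi}{6\sqrt3}\sqrt{24n-1}$ at $m=1$.

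I then split the $k$-sum as in Lemma \ref{lem:E11Bound}. Large $k$, beyond the Bessel argument, uses Lemma \ref{L:Bessel} (1) and $\sum k^{-3}$, which gives the $\pi^2/(6(24n-1))$ term. Small $k$ is bounded by the number of terms times the first ($k=6$) Bessel value, using $(24n-1)^{-3/2}\cdot\sqrt{24n-1}$, which gives $\frac{8\pi^2}{24n-1}I_3\left(\frac{\pi}{6\sqrt3}\sqrt{24n-1}\right)$. The $E_M'$ piece is treated the same way and gives the last two terms, with argument $\frac{\pi}{18}\sqrt{24n-1}$.

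The main obstacle is bookkeeping. I need to correctly identify the effective exponent from the combination of $\exp(\pi/z)$ with the second-order term of $\hat G$, and track the constants through the substitution $t=cw$ and the restriction $6\mid k$. The constants themselves follow routinely once the splitting is fixed.
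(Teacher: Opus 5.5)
Your overall scheme is the paper's: peel off the leading part of $\hat G$, bound the rest on the chord via \eqref{Real(z)}, complete the main piece to the circle $C$, apply \eqref{watson formula}, and split the $k$-sum at the Bessel argument. For the piece coming from $1$ this works. The gap is in $E_M'$. Your secondary term is $-e^{2\pi i h'/k}e^{-2\pi/z}$, so the integrand of $E_M'$ carries $e^{\pi/z}e^{-2\pi/z}=e^{-\pi/z}$, which decays. No Bessel function with argument $\frac{\pi}{18}\sqrt{24n-1}$ can come out of it, so the claim that this piece ``gives the last two terms'' is false. Worse, the fallback you offer (keep the contour at $\mathfrak{R}(w)=1$ and bound absolutely) does not work. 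With $A=\frac{(24n-1)\pi}{12k^2}$, on $w=1+iy$ the integrand $w^{-4}e^{-\pi w+A/w}$ has modulus $e^{-\pi}(1+y^2)^{-2}e^{A/(1+y^2)}$, so you only get a bound of size $e^{A}$, exponential in $n$. Equivalently, on $C$ near $z=1$ the factor $\exp\bigl(\frac{(24n-1)\pi\mathfrak{R}(z)}{12k^2}\bigr)$ is no longer controlled by $\mathfrak{R}(z)<\frac{2k^2}{N^2}$. What is true is that this full-circle integral is exactly $0$: the integrand is holomorphic for $\mathfrak{R}(w)>0$ and decays as $\mathfrak{R}(w)\to+\infty$, so the line can be pushed off to the right. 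Simpler still, do not complete $E_M'$ at all; keep it on the chord with $E_R$, where $|e^{-\pi/z}|\le e^{-\pi}$.

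With this repair you get $|E|\le C+\frac{\pi^2}{6(24n-1)}+\frac{8\pi^2}{24n-1}I_3\bigl(\frac{\pi}{6\sqrt3}\sqrt{24n-1}\bigr)$. Redoing the estimate for $I_{ME_1}$ with prefactor $1/k^4$, $e^{\pi}$ in place of $e^{\pi/12}$, and $\sum_{6\mid k}k^{-2}=\zeta(2)/36$ gives $C$ of size about $6\cdot 10^{6}$. The statement then follows because its last two terms are nonnegative. Those two terms come from a different split in the paper, which peels off $1+3e^{-2\pi/(3z)}$ as in Proposition \ref{prop:I3}. The full-circle integral of the $3e^{-2\pi/(3z)}$ piece has exponent $\frac{\pi}{3z}$ and hence Bessel argument $\frac{\pi}{3k}\sqrt{24n-1}$, which equals $\frac{\pi}{18}\sqrt{24n-1}$ at $k=6$. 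For $6\mid k$ the $P^3$-factor of $\hat G$ is evaluated at a root of unity times $e^{-6\pi/z}$, so no such term occurs in your (correct) expansion, and you should not try to manufacture it. Finally, fix the normalization in the Watson step: it is $-2\pi i k^3(12\beta)^{3/2}(24n-1)^{-3/2}I_3(\cdot)$, not $(12\beta/\pi)^{3/2}$. With $\beta=1$ this gives $|E_{MM}|\le 48\sqrt3\,\pi(24n-1)^{-3/2}\sum_{m\ge1}I_3\bigl(\frac{\pi\sqrt{24n-1}}{6\sqrt3\,m}\bigr)$. Your large-$m$/small-$m$ split then yields exactly $\frac{\pi^2}{6(24n-1)}$ and $\frac{8\pi^2}{24n-1}I_3\bigl(\frac{\pi}{6\sqrt3}\sqrt{24n-1}\bigr)$.
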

\begin{proof}
As in the proof of Proposition \ref{prop:I3}, we split 
\begin{align*}
E =& \sum_{\substack{1\leq k \leq N \\  2\mid k \; \text{and}\; 3\mid k}}\;\; {\sum_{\substack{{0 \leq h<k} \\
{\gcd(h, k) = 1}}}} \frac{i}{k^4} \left( \omega_{h, k}\right)^{-1}\left( \omega_{h, \frac{k}{2}} \right)^2 \left( \omega_{{h}, \frac{k}{3}} \right)^3 e^{\frac{-2\pi i h n}{k}}\\
& \times \int_{z_1}^{z_2} z^2 \exp\left( \frac{\pi}{z} + \frac{(24n-1)\pi z}{12k^2}\right) \times \left( 1 + 3e^{\frac{-2\pi}{3z}} + \hat{G} \left( h, k, e^{\frac{-\pi}{3z}} \right) - 1 - 3e^{\frac{-2\pi}{3z}} \right)dz\\
&=E_M +E_{M}' + E_R,    
\end{align*}
where $E_M$ (resp. $E_M'$) is the contribution from $1$ (resp. $3e^{-\frac{2\pi}{3z}}$). As in the proof of Proposition \ref{prop:I3}, we split $E_M=E_{MM}+E_{ME}$ and $E_{M}'=E_{MM}'+E_{ME}'$. In this case, there is no main term, and an analogous calculation shows that 
\begin{align*}
\left|E_R\right|&\leq 127,000,000,\\
\left|E_{ME}\right|+\left|E_{ME}'\right|&\leq 282,100,000,\\
\left|E_{MM}\right|&\leq \frac{\pi^2}{6(24n-1)} + \frac{8\pi^2}{24n-1}I_3\left(\frac{\pi}{6\sqrt{3}}\sqrt{24n-1}\right),\\
\left|E_{MM}'\right|&\leq \frac{\pi^2}{108(24n-1)} + \frac{8\pi^2}{9(24n-1)}I_3\left(\frac{\pi}{18}\sqrt{24n-1}\right).\qedhere
\end{align*}
\end{proof}

We are now ready to prove Theorem \ref{thm 1} for $n\not\equiv 4\pmod{6}$.
\begin{prop}
For $n\not\equiv 4\pmod{6}$, we have 
\[
\sgn(c(n))=\begin{cases} 1&\text{if }n\equiv 0,2,3\pmod{6},\\
-1&\text{if }n\equiv 1,5\pmod{6}.
\end{cases}
\]
\end{prop}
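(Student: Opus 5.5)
We combine Propositions \ref{prop:I1}, \ref{prop:I2}, \ref{prop:I3} and \ref{prop:E} via $c(n)=I_1+I_2+I_3+E$. Writing $x:=\frac{\pi}{6}\sqrt{24n-1}$, the main terms add to
\[
M(n):=4\pi(24n-1)^{-\frac32}\, I_3(x)\left(3^{\frac32}+2\cdot 3^{\frac32}(-1)^n+9B_3(n)\right).
\]
We evaluate the bracket for each residue class modulo $6$, using the values of $B_3(n)$ from Proposition \ref{prop:I3}, and write $S(n)$ for the result:
\begin{itemize}
\item $n\equiv 0\pmod 6$: $S(n)=3\sqrt3+6\sqrt3+9\sqrt3=18\sqrt3>0$.
\item $n\equiv 1\pmod 6$: $S(n)=3\sqrt3-6\sqrt3-9\sqrt3=-12\sqrt3<0$.
\item $n\equiv 2\pmod 6$: $S(n)=9\sqrt3>0$.
\item $n\equiv 3\pmod 6$: $S(n)=3\sqrt3-6\sqrt3+9\sqrt3=6\sqrt3>0$.
\item $n\equiv 5\pmod 6$: $S(n)=-3\sqrt3<0$.
\item $n\equiv 4\pmod 6$: $S(n)=9\sqrt3-9\sqrt3=0$, which is why this case is excluded and treated separately in Section \ref{sec:vanish}.
\end{itemize}
The sign of $S(n)$ matches the claimed sign in each case. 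Moreover $|S(n)|\geq 3\sqrt3$ whenever $n\not\equiv 4\pmod 6$, so
\[
|M(n)|\geq 12\sqrt3\,\pi(24n-1)^{-3/2}I_3(x).
\]

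Next we show that the total error $|\mathscr E_1|+|\mathscr E_2|+|\mathscr E_3|+|E|$ is strictly smaller than this lower bound for all $n$ beyond an explicit threshold $n_0$. The error consists of two kinds of terms.
\begin{itemize}
\item \emph{Constant terms.} These total roughly $1.2\times 10^9$, plus terms of size $O(1/(24n-1))$.
\item \emph{Bessel terms.} These have the form $\frac{C}{24n-1}I_3(\alpha x)$ with ratios $\alpha\in\{\frac15,\frac12,\frac13,\frac{1}{\sqrt3}\}$ relative to $x$. The worst ratio is $\alpha=\frac{1}{\sqrt3}$, coming from the term with $I_3\bigl(\frac{\pi}{6\sqrt3}\sqrt{24n-1}\bigr)$.
\end{itemize}
We bound each Bessel error term from above by Lemma \ref{L:Bessel}(2), which gives $I_3(\alpha x)\leq\sqrt{2/(\pi\alpha x)}\,e^{\alpha x}$ once $\alpha x\geq1$. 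We bound the main term from below by Lemma \ref{L:Bessel}(3) with $\kappa=3$, which gives
\[
I_3(x)>\frac{2^{-3}}{5\sqrt\pi\,\Gamma(7/2)}\frac{e^x}{\sqrt x}\qquad (x\geq 3).
\]
Dividing, it suffices to show two inequalities.
\begin{itemize}
\item Each ratio of a Bessel error term to the main term is $O\bigl(\sqrt{24n-1}\,e^{-(1-\alpha)x}\bigr)$. This tends to $0$, and we make it less than, say, $\frac1{10}$ for explicit $n$.
\item The constant terms satisfy $1.3\times 10^9 < c\,(24n-1)^{-2}e^{x}$ for an explicit constant $c$.
\end{itemize}
Since $e^x$ grows like $e^{\pi\sqrt{2n/3}}$, the second inequality holds once roughly $x\gtrsim 40$, i.e.\ $n$ of order a few hundred. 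We will determine the precise $n_0$ by solving these inequalities numerically.

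For $1\le n<n_0$ (and $n=0$, where $c(0)=1$), we verify the sign pattern directly by computing the coefficients of $G(q)=P^2(q^2)P^3(q^3)/P(q)$ up to $n_0$. The main obstacle is controlling the sheer size of the constant errors: they force $n_0$ to be in the hundreds, so a finite computer check is unavoidable. The delicate point in the analytic part is the $\alpha=\frac1{\sqrt3}$ Bessel term, whose exponential saving $e^{-(1-1/\sqrt3)x}$ is the weakest. If the threshold turns out to be unreasonably large, we would first try to sharpen the constant bounds (e.g.\ the crude $e^{4\pi}$ factor coming from $N=\lceil\sqrt n\rceil$) before enlarging the numerical verification range.
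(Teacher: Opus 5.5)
Your proposal is correct and follows essentially the same route as the paper: the same main-term coefficients (your $4S(n)$ reproduces the paper's $72,-48,36,24,-12$ times $\sqrt3$), the same lower bound $12\sqrt3\,\pi(24n-1)^{-3/2}I_3\bigl(\tfrac{\pi}{6}\sqrt{24n-1}\bigr)$, Lemma \ref{L:Bessel}(2) and (3) to compare the Bessel error terms and constants against it, and a computer check for small $n$. The paper makes your threshold explicit by writing the total ratio bound as a function of $\sqrt{24n-1}$, showing it is decreasing for $\sqrt{24n-1}\geq 8$ and below $1$ at $\sqrt{24n-1}=70$, which gives $n\geq 205$; its constant errors total $1{,}027{,}246{,}540$, so your $1.3\times 10^9$ is a safe overestimate.
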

\begin{proof}
We combine Proposition \ref{prop:I1}, Proposition \ref{prop:I2}, Proposition \ref{prop:I3}, and Proposition \ref{prop:E} to conclude that 
\[
c(n)=  
\left(4\cdot3^{3/2} +(-1)^n 8\cdot 3^{\frac{3}{2}} + 36B_3(n)\right)
\frac{\pi}{(24n-1)^{3/2}} I_3\left( \frac{\pi}{6}  \sqrt{(24n-1)}\right) + \mathscr{E}
\]
with 
\begin{multline}\label{eqn:scrEBound}
\left|\mathscr{E}\right|\leq 175,200,000+\frac{\pi^2}{8\sqrt{3}(24n-1)}+\frac{2\sqrt{3}\pi^2}{(24n-1)} I_3\left(\frac{\pi}{30}\sqrt{24n-1}\right)\\
+64,900,000+4,367,640+\frac{\pi^2}{2\sqrt{3}(24n-1)} + \frac{8\sqrt{3}\pi^2}{24n-1}I_3\left(\frac{\pi}{12}\sqrt{24n-1}\right)\\
+115,700,000+257,978,900+\frac{\pi^2}{8(24n-1)}+\frac{6\pi^2}{24n-1}I_3\left(\frac{\pi}{12}\sqrt{24n-1}\right)\\
+\frac{\pi^2}{72(24n-1)}+\frac{2\pi^2}{3(24n-1)} I_3\left(\frac{\pi}{18}\sqrt{24n-1}\right)\\
+127,000,000+ 282,100,000+ \frac{\pi^2}{6(24n-1)} + \frac{8\pi^2}{24n-1}I_3\left(\frac{\pi}{6\sqrt{3}}\sqrt{24n-1}\right)\\
+\frac{\pi^2}{108(24n-1)} + \frac{8\pi^2}{9(24n-1)}I_3\left(\frac{\pi}{18}\sqrt{24n-1}\right)\\
=1,027,246,540+\left(\frac{4}{27}+\frac{5}{8\sqrt{3}}\right)\frac{\pi^2}{24n-1}+ \frac{2\sqrt{3}\pi^2}{(24n-1)} I_3\left(\frac{\pi}{30}\sqrt{24n-1}\right)\\
+\frac{(8\sqrt{3}+6)\pi^2}{24n-1}I_3\left(\frac{\pi}{12}\sqrt{24n-1}\right) + \frac{14\pi^2}{9(24n-1)}I_3\left(\frac{\pi}{18}\sqrt{24n-1}\right)\\
+ \frac{8\pi^2}{24n-1}I_3\left(\frac{\pi}{6\sqrt{3}}\sqrt{24n-1}\right).
\end{multline}
The main term is 
\begin{equation}\label{eqn:MainOverall}
\frac{\pi\sqrt{3}}{(24n-1)^{3/2}} I_3\left( \frac{\pi}{6}  \sqrt{(24n-1)}\right) \times \begin{cases}
72&\text{if }n\equiv 0\pmod{6},\\
-48&\text{if }n\equiv 1\pmod{6},\\
36&\text{if }n\equiv 2\pmod{6},\\
24&\text{if }n\equiv 3\pmod{6},\\
-12&\text{if }n\equiv 5\pmod{6}.
\end{cases}
\end{equation}
Hence we obtain the claim if \eqref{eqn:scrEBound} is smaller than the absolute value of \eqref{eqn:MainOverall}.

The claim hence holds as long as 
\begin{multline*}
1\geq \frac{1,027,246,540(24n-1)^{\frac{3}{2}}}{12\sqrt{3}\pi I_3\left(\frac{\pi}{6}\sqrt{24n-1}\right)} +\frac{\left(\frac{4}{27}+\frac{5}{8\sqrt{3}}\right)\pi^2\sqrt{24n-1}}{12\sqrt{3}\pi I_3\left(\frac{\pi}{6}\sqrt{24n-1}\right)}\\
+\frac{2\sqrt{3}\pi^2\sqrt{24n-1}}{12\sqrt{3}\pi} \frac{I_3\left(\frac{\pi}{30}\sqrt{24n-1}\right)}{I_3\left(\frac{\pi}{6}\sqrt{24n-1}\right)}+\frac{(8\sqrt{3}+6)\pi^2\sqrt{24n-1}}{12\sqrt{3}\pi} \frac{I_3\left(\frac{\pi}{12}\sqrt{24n-1}\right)}{I_3\left(\frac{\pi}{6}\sqrt{24n-1}\right)}\\
+ \frac{14\pi^2\sqrt{24n-1}}{9\cdot12\sqrt{3}\pi}\frac{I_3\left(\frac{\pi}{18}\sqrt{24n-1}\right)}{I_3\left(\frac{\pi}{6}\sqrt{24n-1}\right)}+ \frac{8\pi^2\sqrt{24n-1}}{12\sqrt{3}\pi} \frac{I_3\left(\frac{\pi}{6\sqrt{3}}\sqrt{24n-1}\right)}{I_3\left(\frac{\pi}{6}\sqrt{24n-1}\right)}.
\end{multline*}
Since $n\geq 4$, we may use Lemma \ref{L:Bessel} to bound the Bessel functions, and hence we see that the claim holds if
\begin{multline*}
1\geq \frac{75(1,027,246,540)\sqrt{\pi}(24n-1)^{\frac{7}{4}}}{36\sqrt{2}e^{\frac{\pi}{6}\sqrt{24n-1}}} +\frac{75\left(\frac{4}{27}+\frac{5}{8\sqrt{3}}\right)\pi^{\frac{5}{2}}(24n-1)^{\frac{3}{4}}}{36\sqrt{2}e^{\frac{\pi}{6}\sqrt{24n-1}}}\\
+\frac{75\pi^{\frac{3}{2}}\sqrt{10}\sqrt{24n-1}}{12\sqrt{3}e^{\frac{2\pi}{15}\sqrt{24n-1}}}+\frac{150(8\sqrt{3}+6)\pi^{\frac{3}{2}}\sqrt{24n-1}}{12\sqrt{3}e^{\frac{\pi}{12}\sqrt{24n-1}}}+ \frac{75\cdot 14\sqrt{6}\pi^{\frac{3}{2}}\sqrt{24n-1}}{108\sqrt{3}e^{\frac{\pi}{9}\sqrt{24n-1}}}\\
+ \frac{75\cdot 8\sqrt{2} \pi^{\frac{3}{2}}\sqrt{24n-1}}{12\cdot3^{\frac{1}{4}} e^{\frac{(\sqrt{3}-1)\pi}{6\sqrt{3}}\sqrt{24n-1}}}.
\end{multline*}
Setting $x:=\sqrt{24n-1}$, this holds if 
\[
f(x):=\frac{2682213774 x^\frac{7}{2}}{e^{\frac{\pi}{6}x}} + \frac{14 x^{\frac{3}{2}}}{e^{\frac{\pi}{6}x}}+\frac{64x}{e^{\frac{2\pi x}{15}}}+\frac{798 x}{e^{\frac{\pi}{12}x}}+\frac{70 x}{e^{\frac{\pi}{9}x}}+\frac{300 x}{e^{\frac{(\sqrt{3}-1)\pi}{6\sqrt{3}}x}}\leq 1.
\]
Computing 
\begin{multline*}
f'(x):=\frac{2682213774 x^\frac{7}{2}}{e^{\frac{\pi}{6}x}}\left(-\frac{\pi}{6}+\frac{7}{2x}\right) + \frac{14 x^{\frac{3}{2}}}{e^{\frac{\pi}{6}x}}\left(-\frac{\pi}{6}+\frac{3}{2x}\right)+\frac{64x}{e^{\frac{2\pi x}{15}}}\left(-\frac{2\pi}{15}+\frac{1}{x}\right)\\
+\frac{798 x}{e^{\frac{\pi}{12}x}}\left(-\frac{\pi}{12}+\frac{1}{x}\right)+\frac{70 x}{e^{\frac{\pi}{9}x}}\left(-\frac{\pi}{9}+\frac{1}{x}\right)+\frac{300 x}{e^{\frac{(\sqrt{3}-1)\pi}{6\sqrt{3}}x}}\left(-\frac{(\sqrt{3}-1)\pi}{6\sqrt{3}}+\frac{1}{x}\right),
\end{multline*}
one easily checks that $f'(x)<0$ for $x\geq 8$. Since $f(70)<1$, we conclude that $f(x)<1$ for $x\geq 70$, or, equivalently, $n\geq 205$. We check the remaining cases with a computer.\qedhere
\end{proof}

\section{Vanishing of Coefficients for \texorpdfstring{$n \equiv 4 \pmod 6$}{n=4(mod 6)}}\label{sec:vanish}
To complete the proof of Theorem \ref{thm 1}, we need to verify that the remaining coefficients vanish.
\begin{lem}
Writing 
\[
F(z)=q^{-\frac{1}{2}}\sum_{n\geq 0}c(n)q^n,
\]
we have,
\[c(n) =0 \text{ for } n \equiv 4 \pmod 6.\]
\end{lem}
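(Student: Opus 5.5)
The plan is to prove the vanishing by a dissection of the product. Direct $q$-series manipulation should suffice here, with no analytic input. Since $n\equiv 4\pmod 6$ means $n\equiv 1\pmod 3$ and $n$ even, I would first extract the residue class $1\pmod 3$ using a $3$-dissection. I would then show that the resulting series in $q^{1/3}$-shifted form contains only even powers where required.

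\emph{Rewriting $G$.} Using $\psi(q):=\sum_{k\ge0}q^{T_k}=\frac{(q^2;q^2)_\infty^2}{(q;q)_\infty}$, we have
\[
G(q)=\frac{1}{\psi(q)\,(q^3;q^3)_\infty^3}.
\]
The factor $(q^3;q^3)_\infty^{-3}$ is a series in $q^3$. Hence the terms $q^{3m+1}$ of $G$ come exactly from the terms $q^{3m+1}$ of $1/\psi(q)$, multiplied by $(q^3;q^3)_\infty^{-3}$.

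\emph{$3$-dissection of $1/\psi$.} I would invoke the classical $3$-dissection
\[
\psi(q)=A(q^3)+q\psi(q^9),\qquad A(q):=f(q,q^2)=\frac{(q^2;q^2)_\infty(q^3;q^3)_\infty^2}{(q;q)_\infty(q^6;q^6)_\infty}.
\]
Writing $a=A(q^3)$ and $b=q\psi(q^9)$, I would use
\[
\frac1{a+b}=\frac{a^2-ab+b^2}{a^3+b^3}.
\]
Here $a^3+b^3=A(q^3)^3+q^3\psi(q^9)^3=:D(q^3)$ is a series in $q^3$, and $-ab$ is the only term lying in the class $q^{3m+1}$. This yields
\[
\sum_{m\ge0}c(3m+1)q^{m}=-\frac{A(q)\,\psi(q^3)}{D(q)\,(q;q)_\infty^3}.
\]
It then suffices to show that this series is even in $q$. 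Indeed $3m+1\equiv4\pmod 6$ exactly when $m$ is odd, so evenness gives $c(n)=0$ for $n\equiv 4\pmod 6$.

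\emph{Identifying $D$.} The key step, and the one I expect to be the main obstacle, is to write $D$ as an eta quotient. My first approach uses the norm: with $\zeta=e^{2\pi i/3}$,
\[
D(q^3)=\prod_{j=0}^{2}\psi(\zeta^jq)\quad\text{up to the identity }a^3+b^3=\prod_j(a+\zeta^jb).
\]
This product over cube roots of unity of the eta quotient $\psi$ is computed by
\[
\prod_{j}(\zeta^jq;\zeta^jq)_\infty=\frac{(q^3;q^3)_\infty^4}{(q^9;q^9)_\infty}
\]
and its analogue for $q^2$. The result is $D(q)$ as an explicit product of $(q^{k};q^{k})_\infty$ for $k\mid 18$, after which I would rescale $q^3\mapsto q$. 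If this becomes messy, I would instead use the known identity $A(q)^3+q\psi(q^3)^3=\psi(q)^3\cdot(\text{eta quotient})$, checking it against the first several coefficients.

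\emph{Checking evenness.} With everything written as a single eta quotient $\prod_k (q^k;q^k)_\infty^{e_k}$, I would show that it is invariant under $q\mapsto -q$. For this I would use $(-q;-q)_\infty=\frac{(q^2;q^2)_\infty^3}{(q;q)_\infty(q^4;q^4)_\infty}$ and the analogous formula for odd $k$, while factors with even $k$ are already invariant. The exponents should cancel so that $f(-q)=f(q)$. Equivalently, the exponents of the odd-level factors must satisfy the cancellation condition after the substitution. As a safety check before writing the argument out, I would verify numerically that $c(4)=c(10)=c(16)=0$.
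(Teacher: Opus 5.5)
Your proposal is correct, and it takes a genuinely different route from the paper.

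\textbf{The paper's route.} It sieves $g(z)=F(2z)$ with the operator $S_{12,7}$ and multiplies by $\Delta^J$ to clear the poles at the cusps. It then appeals to the valence formula on $\Gamma_0(48)\cap\Gamma_1(12)$, which reduces the vanishing to a finite computer check of coefficients.

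\textbf{Your route.} You stay entirely inside $q$-series. The step you flagged as the main obstacle goes through exactly as planned. Using $\prod_j(\zeta^jq^2;\zeta^jq^2)_\infty=(q^6;q^6)_\infty^4/(q^{18};q^{18})_\infty$, the norm computation gives
\[
D(q)=A(q)^3+q\psi(q^3)^3=\frac{(q^2;q^2)_\infty^8\,(q^3;q^3)_\infty}{(q;q)_\infty^4\,(q^6;q^6)_\infty^2}=1+4q+6q^2+7q^3+\cdots .
\]
Everything then collapses to
\[
\sum_{m\geq 0}c(3m+1)q^m=-\frac{(q^6;q^6)_\infty^3}{(q^2;q^2)_\infty^7}.
\]
This involves only even-level factors, so evenness is immediate and the $q\mapsto -q$ bookkeeping is not needed. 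It agrees with $c(1)=-1$, $c(4)=0$, $c(7)=-7$, $c(10)=0$.

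\textbf{What each approach buys.} Your argument is complete, computer-free, and explains why the vanishing happens. As a bonus, $c(6m+1)$ equals minus the coefficient of $q^m$ in $(q^3;q^3)_\infty^3/(q;q)_\infty^7$. That coefficient is strictly positive, since $(q^3;q^3)_\infty/(q;q)_\infty$ has nonnegative coefficients and $1/(q;q)_\infty^4$ has positive ones. This settles the case $n\equiv 1\pmod 6$ of Theorem~\ref{thm 1} for all $n$ without the circle method.

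The paper's modular-forms approach is more mechanical. It would apply to any residue class and any eta quotient without first finding a dissection identity. However, it requires determining the pole orders at all cusps of $\Gamma_0(48)\cap\Gamma_1(12)$, and it ultimately rests on a numerical verification that the paper does not display.
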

\begin{proof}
Set
\[ g(z)=F(2z) = \sum_{n\geq 0} c(n) q^{2n-1}.\]
We the claim that $ g\big| S_{12, 7}=0.$ We have 
\begin{align*}
    g\big| S_{12, 7}(z) &= \sum_{\substack{{n \geq 0}\\ {n\equiv 4 \pmod {5}}}} c(n) q^{2n-1} \\
    &= \frac{1}{12} {\sum_{\substack{{n \geq 0}\\ {j\pmod {12}}}}} c(n) q^{2n-1} e^{\frac{2\pi i (2n-1)j}{12}} e^{\frac{-2\pi i 7j}{12}}\\
    &= \frac{1}{12}  {\sum_ {j\pmod {12}}} e^{\frac{-2\pi i 7j}{12}} \sum_{n\geq 0} c^*(n) e^{2\pi i (2n-1) (z + \frac{j}{12})}\\
&= \frac{1}{12} \sum_{j\pmod {12}} e^{\frac{-2\pi i 7j}{12}} g \left(z+ \frac{j}{12}\right).
\end{align*}

Suppose that the maximal order of a pole of $g$ at any cusp is $\leq J$. Then  \[ h_J := \Delta^J g\big|_{S_{12, 7}} \]
is a holomorphic modular form of weight $12J - 2$  in $\Gamma \subseteq SL_2(\mathbb{Z)}$, where $\Gamma = \Gamma_0(48) \cap \Gamma_1(12).$ We then use the valence formula and check vanishing for the first $(J-\frac{1}{6})[\operatorname{SL}_2(\Z):\Gamma]$ coefficients to verify that $h_J=0$, and hence so is $g|S_{12,7}.$
\end{proof}

\section{Combinatorial interpretation of the eta-quotient}\label{sec:combinatorics}

We now prove Theorem \ref{thm:combinatorics}.
\begin{proof}[Proof of Theorem \ref{thm:combinatorics}]
By Theorem \ref{thm 1}, in order to show the claim, we only need to show that $c(n)=r_{\operatorname{e}}(n)-r_{\operatorname{o}}(n)$. To show this, we rewrite the eta-quotient as 
\begin{equation}\label{eqn:G1-2-3product}
G_{(1,-2,-3)}(q)=q^{\frac{1}{2}}F(\tau)= \frac{q^{\frac{1}{8}}}{\frac{\eta(2\tau)^2}{\eta(\tau)}} \cdot \left(\frac{q^{\frac{1}{8}}}{\eta(3\tau)}\right)^3.
\end{equation}
Using \cite[Theorem 1.60]{Ken}, we have 
\begin{align*}
\frac{q^{\frac{1}{8}}}{\frac{\eta(2\tau)^2}{\eta(\tau)}}& =\frac{1}{\sum_{n\geq 0} q^{\frac{(2n+1)^2-1}{8}}}=\frac{1}{1+\sum_{n\geq 1} q^{T_n}} = \sum_{j=0}^{\infty}(-1)^j\left(\sum_{n\geq 1} q^{T_n}\right)^j\\
&=1+ \sum_{n_1=1}^{\infty} q^{n_1}\sum_{j=1}^{n_1} \sum_{\substack{(x_1,\dots,x_j)\in\N^j\\ T_{x_1}+\dots+T_{x_j}=n_1}}(-1)^j.
\end{align*}
and 
\begin{align*}
\frac{q^{\frac{1}{8}}}{\eta(3\tau)}&= \frac{1}{\sum_{w=-\infty}^{\infty}(-1)^w q^{\frac{(6w+1)^2-1}{8}}} =  \frac{1}{1+\sum_{w\in\Z\setminus\{0\}}(-1)^w q^{3P_5(w)}}\\
&=\sum_{j=0}^{\infty}\left(\sum_{w\in\Z\setminus\{0\}}(-1)^w q^{3P_5(w)}\right)^j\\
&=1+\sum_{n=1}^{\infty}q^{3n}\sum_{j=1}^n\sum_{\substack{(w_1,\dots,w_{j})\in (\Z\setminus\{0\})^j\\ P_5(w_1)+\dots+P_5(w_j)=n}} (-1)^{j}(-1)^{w_1+w_2+\dots+w_{j}}
\end{align*}
Plugging into \eqref{eqn:G1-2-3product} and combining powers of $q$ yields the claim.
\end{proof}

\end{document}